\newtheorem{Theorem}{Theorem}[section] 
\newtheorem{Corollary}[Theorem]{Corollary} 
\newtheorem{Lemma}[Theorem]{Lemma}
\theoremstyle{definition} 
\newcommand{\R}{\mathbbm{R}}
\newcommand{\st}{\mbox{s.t.}}
\newcommand{\MI}{\mathcal{I}}
\newcommand{\MO}{\mathcal{O}}
\DeclareMathOperator{\conv}{conv}
\DeclareMathOperator{\cone}{cone}
\DeclareMathOperator{\Tree}{P_{Tree}}
\DeclareMathOperator{\GHTree}{P_{GH-Tree}}
\DeclareMathOperator{\Proj}{Proj}
\DeclareMathOperator{\dist}{dist}
\title{An extension of disjunctive programming and its impact for compact tree formulations}
\author{R\"udiger Stephan}
\date{}
\begin{document}

\maketitle

\begin{abstract}
In the 1970's,
Balas~\cite{Balas79, Balas10} introduced the concept of disjunctive programming,
which is optimization over unions of polyhedra.
One main result of his theory is that, given linear descriptions for each of the polyhedra to be taken in the union, one can easily derive an extended formulation of the convex hull of the union of these polyhedra. In this paper, we give a generalization of this result by
extending the polyhedral structure of the variables coupling the polyhedra taken in the union.
Using this generalized concept, we derive polynomial size linear programming formulations
({\em compact formulations}) for a well-known spanning tree approximation of Steiner trees,
for Gomory-Hu trees, and, as a consequence, of the minimum $T$-cut problem
(but not for the associated $T$-cut polyhedron). Recently, Kaibel and Loos~\cite{KL10}
introduced a more involved framework called {\em polyhedral branching systems} to derive extended formulations. The most parts of our model can be expressed in terms of their framework. The value of our model can be seen in the fact that it completes their framework by an interesting algorithmic aspect.
\end{abstract}

\section{Introduction}

Let $Q:= \{(x,y) \in \R^p \times \R^q\,|\, Ax+By \geq c\}$ be a polyhedron. The {\em projection of $Q$ onto the $x$-space} is the polyhedron
$$ \Proj_x(Q):=\{x \in \R^p\,|\, \exists y \in \R^q: (x,y) \in Q\}.$$
Conversely, $Q$ is said to be an {\em extension} of the projected polyhedron $P:= \Proj_x(Q)$,
and the system $Ax+By \geq c$ is an {\em extended formulation} for $P$.
The extended formulation $Ax+By\geq c$ is {\em compact} if $q$, the number of rows,
and the input length of each entry of the inequality system is polynomial in $p$.

In the 1970's,
Balas~\cite{Balas79, Balas10} introduced the concept of disjunctive programming,
which is optimization over unions of polyhedra. Below we restate a well-known result saying that, given linear descriptions for each of the polyhedra to be taken in the union, one can easily derive an extended formulation of the convex hull of the union of these polyhedra.
Recently, Kaibel and Loos~\cite{KL10} introduced a powerful framework called {\em polyhedral branching systems} that generalizes Balas' result as well as the framework of Martin, Rardin, and Campbell~\cite{MRC88} to derive extended formulations from dynamic programming algorithms for combinatorial optimization problems.

In this paper, we consider a  generalization of the concept of disjunctive programming
whose polyhedral aspects are covered by the framework of Kaibel and Loos~\cite{KL10}.
The motivation of our model lies in the algorithmic interpretation of disjunctive programming as a two-level approach to solve a linear optimization problem. Since the generalization is straightforward, while the framework of polyhedral branching systems is quite involved (at least, it would need some space to explain it), we pass on a description of that framework and directly start our considerations with disjunctive programming, its interpretation as two-level optimization model, and its consequences for extended formulations.
  
Given a finite collection of polyhedra
$P^i$, $i \in \MI$, where $\MI$ is a finite index set, a {\em disjunctive program} is a mathematical program of the form
\begin{equation} \label{eq:dpmax}
\begin{array}{cr@{\,}l}
\max & w^Tx\\
\mbox{s.t.} & x & \in \bigcup\limits_{i\in \MI}P^i.
\end{array}
\end{equation}

By a well-known result of Balas~\cite{Balas05}, 
given complete linear descriptions of each of the polyhedra $P^i$ to be taken in the union,
one can describe the convex combination of the union of the polyhedra by an extended
formulation.

\begin{Theorem}[Balas~\cite{Balas05}] \label{T:dp1}
Given polyhedra
$$P^i = \{x \in \R^n \,|\, A^i x \geq b^i\} \, = \, \conv V^i + \cone R^i,\; i \in \MI,$$
 the following system:
\begin{equation} \label{eq:dp}
\begin{array}{r@{\,}l@{\hspace{1.0cm}}l}
 x - \sum\limits_{i \in \MI} x^i &=0,\\[0.8em]
A^i x^i -\lambda_i b^i &\geq 0, & i \in \MI,\\
\sum\limits_{i\in \MI} \lambda_i  &= 1,\\
\lambda_i  &\geq  0,  & i \in \MI
\end{array}
\end{equation}
provides an extended formulation for the polyhedron $$P_\MI := \conv \bigcup\limits_{i\in \MI} V^i + \cone \bigcup\limits_{i\in \MI} R^i.$$
In particular, denoting by $P$ the set of vectors $(x, \{x^i,\lambda_i\}_{i \in \MI})$
satisfying~\eqref{eq:dp},
\begin{itemize}
\item[(i)] if $x^\star$ is a vertex of $P_\MI$, then $(\bar{x}, \{\bar{x}^i,\bar{\lambda}_i\}_{i \in \MI})$
is a vertex of $P$, with $\bar{x}=x^\star$, $(\bar{x}^k,\bar{\lambda}^k)=(x^\star,1)$
for some $k \in \MI$, and $(\bar{x}^i,\bar{\lambda}_i)=(0,0)$ for $i \in \MI \setminus \{k\}$;
\item[(ii)] if  $(\bar{x}, \{\bar{x}^i,\bar{\lambda}_i\}_{i \in \MI})$ is a vertex of $P$, then
$(\bar{x}^k,\bar{\lambda}^k)=(\bar{x},1)$ for some $k \in \MI$, $(\bar{x}^i,\bar{\lambda}_i)=(0,0)$ for
$i \in \MI \setminus \{k\}$, and $\bar{x}$ is a vertex of $P_\MI$. \hfill $\Box$
\end{itemize}
\end{Theorem}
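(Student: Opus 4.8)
The plan is to split the proof into the projection identity $\Proj_x(P) = P_\MI$ and the vertex correspondence (i)--(ii). Throughout one may assume every $P^i$ is nonempty, since an empty $P^i$ contributes no generators and can be deleted from $\MI$; then $\{z \in \R^n : A^i z \ge 0\}$ is precisely the recession cone $\cone R^i$ of $P^i$. For the inclusion $\Proj_x(P) \subseteq P_\MI$, take $(x, \{x^i, \lambda_i\})$ satisfying~\eqref{eq:dp} and split $\MI$ according to whether $\lambda_i > 0$ or $\lambda_i = 0$. On the first set $A^i x^i \ge \lambda_i b^i$ gives $\tfrac{1}{\lambda_i} x^i \in P^i \subseteq P_\MI$; on the second $A^i x^i \ge 0$ gives $x^i \in \cone R^i$. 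Since $\sum_i \lambda_i = 1$, the identity $x = \sum_i x^i = \sum_{\lambda_i>0} \lambda_i \big(\tfrac{1}{\lambda_i} x^i\big) + \sum_{\lambda_i=0} x^i$ exhibits $x$ as a convex combination of points of $P_\MI$ plus an element of $\cone\bigcup_i R^i \subseteq \operatorname{rec}(P_\MI)$, so $x \in P_\MI$.

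For the reverse inclusion, note that $\Proj_x(P)$ is itself a polyhedron, hence closed and convex; it therefore suffices to show that, for each $k$, it contains every $v \in V^k$ and admits every $r \in R^k$ as a recession direction. Given $v \in V^k$, the assignment $x = v$, $(x^k,\lambda_k) = (v,1)$, $(x^i,\lambda_i)=(0,0)$ for $i\ne k$ satisfies~\eqref{eq:dp}; adding $(r,r,0)$ in the $x$-, $x^k$- and $\lambda_k$-coordinates again satisfies~\eqref{eq:dp} because $A^k r \ge 0$, so $v+tr \in \Proj_x(P)$ for all $t\ge0$. As $P_\MI = \conv\bigcup_k V^k + \cone\bigcup_k R^k$, this yields $P_\MI \subseteq \Proj_x(P)$ and completes the first assertion.

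Now let $(\bar x,\{\bar x^i,\bar\lambda_i\})$ be a vertex of $P$, i.e.\ an extreme point. If $\bar\lambda_j,\bar\lambda_k>0$ for some $j\ne k$, put $z^j := \tfrac{1}{\bar\lambda_j}\bar x^j \in P^j$ and $z^k := \tfrac{1}{\bar\lambda_k}\bar x^k \in P^k$ and perturb via $\lambda_j \mapsto \bar\lambda_j\pm\varepsilon$, $\lambda_k\mapsto\bar\lambda_k\mp\varepsilon$, $x^j\mapsto(\bar\lambda_j\pm\varepsilon)z^j$, $x^k\mapsto(\bar\lambda_k\mp\varepsilon)z^k$, $x\mapsto\bar x\pm\varepsilon(z^j-z^k)$: for small $\varepsilon>0$ both resulting vectors lie in $P$, they differ in the $\lambda_j$-coordinate, and their average is the given vertex --- contradiction. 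Hence exactly one $\bar\lambda_k$ equals $1$ and the remaining $\bar\lambda_i$ vanish. For $i\ne k$ one then has $A^i\bar x^i\ge0$, and if $\bar x^i\ne0$ then replacing the $i$-block by $0$, resp.\ by $2\bar x^i$ (and $\bar x$ by $\bar x-\bar x^i$, resp.\ $\bar x+\bar x^i$) again shows the vertex to be a midpoint of two distinct points of $P$; so $\bar x^i=0$ for $i\ne k$, $\bar x=\bar x^k$, and $A^k\bar x^k\ge b^k$, i.e.\ $\bar x\in P^k$. Finally $\bar x$ is a vertex of $P^k$: any proper convex decomposition $\bar x=\tfrac12(p+q)$ with $p,q\in P^k$ lifts to a proper convex decomposition of the vertex of $P$ (put $p$, resp.\ $q$, in block $k$ with $\lambda_k=1$ and zeros elsewhere). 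This last point is where I expect the real obstacle: the perturbation/lifting argument naturally delivers ``$\bar x$ is a vertex of $P^k$'', whereas a vertex of a single $P^k$ can lie in the relative interior of $\conv\bigcup_i P^i$ and hence need not be a vertex of $P_\MI$ (simple one-dimensional examples with overlapping $P^i$ show this); obtaining the conclusion exactly as stated appears to need an additional non-overlap hypothesis on the $P^i$, and without it I would phrase (ii) with ``vertex of $P^k$'' in place of ``vertex of $P_\MI$''.

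For (i), suppose $x^\star$ is a vertex of $P_\MI$, and let $c$ be a linear functional with $x^\star$ the unique maximiser of $c^\top x$ over $P_\MI=\Proj_x(P)$. The face of $P$ on which $c^\top x$ (evaluated on the $x$-coordinates) is maximal equals $\{(x^\star,\{x^i,\lambda_i\})\in P\}$; it is nonempty, and pointed whenever $P_\MI$ is --- a nonzero recession direction of it would force some nonzero ray $d$ together with $-d$ into $\cone\bigcup_i R^i=\operatorname{rec}(P_\MI)$. Hence this face contains a vertex of $P$, and by the structural analysis above such a vertex is exactly a canonical lift $\bar x=x^\star$, $(\bar x^k,\bar\lambda_k)=(x^\star,1)$, $(\bar x^i,\bar\lambda_i)=(0,0)$ for $i\ne k$, with $x^\star\in P^k$. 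Together with the projection identity this gives the theorem, modulo the caveat on (ii) above.
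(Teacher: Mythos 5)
The paper gives no proof of this theorem --- it is quoted from Balas and closed with a box --- so there is nothing to compare against line by line. Your argument is the standard one and matches in style the perturbation arguments the paper itself uses to prove Theorem~\ref{T:dp2}. All steps check out: the two inclusions establishing $\Proj_x(P)=P_\MI$, the exchange perturbation ruling out two positive $\bar\lambda_i$'s, the doubling perturbation forcing $\bar x^i=0$ when $\bar\lambda_i=0$, and the lifting of a supporting functional to obtain (i). (In (i), ``nonzero recession direction'' should read ``lineality direction'' --- a pointed face may well have nonzero recession directions --- but the intended argument, that a line in the optimal face of $P$ would force a line into $\cone\bigcup_i R^i=\operatorname{rec}(P_\MI)$ and contradict the existence of the vertex $x^\star$, is correct.)

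Your caveat on (ii) is not a defect of your proof but a genuine flaw in the statement as reproduced in the paper. Concretely, take $n=1$, $\MI=\{1,2\}$, $P^1=[0,2]$, $P^2=[1,3]$, so $P_\MI=[0,3]$. The point with $\bar x=\bar x^1=2$, $\bar\lambda_1=1$, $\bar x^2=\bar\lambda_2=0$ satisfies~\eqref{eq:dp}, and the constraints tight there ($-x^1+2\lambda_1\ge0$, $x^2-\lambda_2\ge0$, $-x^2+3\lambda_2\ge0$, $\lambda_2\ge0$, together with the two equations) have rank $5$ in the five variables, so this is a vertex of $P$; yet $\bar x=2$ lies in the relative interior of $P_\MI$. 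The correct conclusion of (ii) --- and the one Balas actually proves --- is that $\bar x$ is a vertex of $P^k$ for the index $k$ with $\bar\lambda_k=1$; ``vertex of $P_\MI$'' holds only under extra hypotheses (e.g.\ when the $P^i$ are pairwise disjoint, or are 0/1-polytopes, so that every vertex of a $P^i$ is a vertex of the disjunction). Since the rest of the paper relies only on Theorem~\ref{T:dp2}, whose vertex statement is phrased in terms of the individual $P^i$, the overstatement is harmless downstream, but your instinct to weaken (ii) is exactly right.
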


By Theorem~\ref{T:dp1}, the disjunctive program~\eqref{eq:dpmax} can be solved by solving the linear program
$\max \{w^Tx\,|\, (x, \{x^i,\lambda_i\}_{i \in \MI}) \mbox{ satisfies~\eqref{eq:dp}}\}$,
provided we are given linear descriptions of each polyhedron $P^i$ as required.

From an algorithmic viewpoint, to solve~\eqref{eq:dpmax}, one usually would
compute an optimal solution of each subproblem $\max \{w^Tx\,|\,x \in P^i\}$,
and then one would choose the best (or a best) among them. This two-level approach
is reflected in the extended formulation. For simplicity, let us assume that,
in the first phase,
for each subproblem $\max \{w^Tx\,|\,x \in P^i\}$ an optimal solution $\bar{x}^i$ exists.
Let $\lambda \in \R^\MI$ be the (variable) vector whose components are the $\lambda_i$.
Then, one defines a vector $\bar{w}$ by $\bar{w}_i := w^T\bar{x}^i$, $i\in \MI$, and
solves, in the second phase, the linear program $\max \{\bar{w}^T\lambda \,|\, \lambda \in \Delta\}$
over the simplex
$$\Delta := \{\lambda \in \R^\MI : \sum_{i \in \MI} \lambda_i=1, \, \lambda_i \geq 0\;\forall\, i \in \MI\}.$$

Given linear programs $\max \{w^Tx\,|\,x \in P^i\}$, it is, however, not always intended to optimize over the union of these polyhedra, but sometimes the subproblems are part of a more complex
optimization problem.

As an example, let us consider (the polyhedral version of)
the minimum spanning tree problem over the metric closure of a weighted graph.
This problem has relevance for the approximation of the Steiner tree problem,
which will be discussed in more detail in Section~\ref{Sec:2}.
We denote the node and edge set of a graph $G$ by $V(G)$ and $E(G)$, respectively.
Given a graph $G$ with a nonnegative edge weighting $w:E(G) \to \R_+$,
the {\em metric closure} of $(G,w)$ is the pair $(K,\bar{w})$, where $K$
is a complete graph on node set $V(G)$ and $\bar{w}(e)$, for $e=\{s,t\}$, is defined to be the length
of a shortest path connecting $s$ and $t$ in $G$ w.r.t. to $w$ if there is any such path,
and otherwise $\bar{w}(e):= + \infty$.
The aim is now to find a spanning tree $T$ of $K$
minimizing $\bar{w}(E(T)) := \sum_{e\in E(T)} \bar{w}(e)$.
The approach obviously consists of a two-level model. In the first step,
we solve a so-called {\em all-pairs shortest path problem}, and in the second step,
a minimum spanning tree problem whose input data are given by the output data
of the first problem.
For each of the two problems, there are known several linear programming formulations and,
among these, even compact formulations. The question now arises whether or not these
formulations can be brought together to provide a linear programming formulation for the entire
problem. The answer to this question is surprisingly quite easy and can be given using
a modification of~\eqref{eq:dp}. Let $\MI := E(K)$ be given by the edge set of $K$, and
let $\min \{\sum w(e)x_e\,|\, A^ex \geq b^e\}$ be a linear programming formulation of a shortest $s,t$-path
problem in $G$ for each edge $e=\{s,t\} \in E(K)=\MI$.
Then, replacing $\Delta$ by a linear characterization
$\Pi := \{\lambda \in \R^{E(K)}\,:\, C\lambda \ge d\}$
of the spanning tree polytope, we propose the following model:
\begin{equation*}
\begin{array}{lr@{\,}l@{\hspace{1.0cm}}l}
\multicolumn{4}{l}{\min \;\sum\limits_{e \in E(G)}w_ex_e}\\[0.8em]
 \mbox{s.t.}& x - \sum\limits_{e \in E(K)} x^e &=0\\[0.8em]
&A^e x^e -\lambda_e b^e &\geq 0, & e \in E(K),\\
&C\lambda & \ge d.
\end{array}
\end{equation*}
In any optimal solution $(\bar{x}, \{\bar{x}^e,\bar{\lambda}_e\}_{e \in E(K)})$,
$\bar{\lambda}$ is then a convex combination of the characteristic vectors of minimum spanning
trees w.r.t.~$\bar{w}:E(K)\to\R_+$.

The aim of the remainder of this section is to prove the correctness of this approach in general.

For any polyhedron $P= \{x \in\R^n\,|\,Ax \geq b\}$ and any $\alpha\in\R_+$,
let $P(\alpha) := \{x \in\R^n\,|\,Ax \geq \alpha b\}$. This implies
that
$$
\begin{array}{cr@{\;}l}
& P &=\conv \{v_1,v_2,\ldots,v_k\}+\cone \{r_1,r_2,\ldots,r_m\}\\
\Leftrightarrow&P(\alpha)&=\conv \{\alpha v_1,\alpha v_2,\ldots,\alpha v_k\}+\cone \{r_1,r_2,\ldots,r_m\}.
\end{array}
$$

\begin{Theorem} \label{T:dp2}
Given pointed polyhedra
$P^i = \{x \in \R^n \,|\, A^i x \geq b^i\}$, $i \in \MI$ and
a 0/1-polytope $\Pi = \{\lambda \in \R^\MI\,|\, C\lambda \ge d\}$, that is, $\Pi = \conv V$ for some
$V \subseteq \{0,1\}^\MI$.
\begin{itemize}
\item[(i)] $(\bar{x}, \{\bar{x}^i\}_{i \in \MI}, \bar{\lambda})$ is a vertex of the polyhedron $Q$ defined as the set of vectors $(x, \{x^i\}_{i \in \MI}, \lambda)$ satisfying
\begin{equation} \label{eq:dpextend}
\begin{array}{r@{\,}l@{\hspace{1.0cm}}l}
 x - \sum\limits_{i \in \MI} x^i &=0,\\[0.8em]
A^i x^i -\lambda_i b^i &\geq 0, & i \in \MI,\\
C\lambda &\ge d
\end{array}
\end{equation}
if and only if $\bar{\lambda}$ is a vertex of $\Pi$, and for each $i \in \MI$,
$\bar{x}^i$ is a vertex of $P^i$ if $\lambda_i=1$ and $\bar{x}^i=0$ otherwise.
\item[(ii)] For any $w \in \R^n$, consider the linear programs
\begin{equation}\label{eq:dp4}
\max \{w^Tx\,|\,(x, \{x^i\}_{i \in \MI}, \lambda) \in Q\},
\end{equation}
\begin{equation}\label{eq:dp5} \tag{5i}
\bar{w}_i:=\max \{w^Tx\,|\,x \in P^i\}
\end{equation}
for $i \in \MI$,
and
\setcounter{equation}{5}
\begin{equation}\label{eq:dp6}
\max \{\bar{w}^T\lambda\,|\, \lambda \in \Pi\}.
\end{equation}
Then, \eqref{eq:dp4} is unbounded if and only if $\bar{w}_i=\infty$ for some $i\in \MI$.
Next, let~\eqref{eq:dp4} be bounded.
Then, $(\bar{x}, \{\bar{x}^i\}_{i \in \MI}, \bar{\lambda})$ is an optimal solution of~\eqref{eq:dp4}
if and only if $\bar{\lambda}_i^{-1}\bar{x}^i$ is an optimal solution of~\eqref{eq:dp5}
for each $i \in \MI$ with $\bar{\lambda}_i>0$ and
$\bar{\lambda}$ is an optimal solution of~\eqref{eq:dp6}.
\end{itemize}
\end{Theorem}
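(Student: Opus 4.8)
The plan is to prove part (i) first, since part (ii) will essentially follow by combining (i) with Theorem~\ref{T:dp1} applied to a suitable reformulation. For (i), the key observation is that the system~\eqref{eq:dpextend} differs from~\eqref{eq:dp} only in that the simplex constraint $\sum_i\lambda_i=1$, $\lambda\ge 0$ is replaced by $C\lambda\ge d$, and that the variable $x$ is merely the aggregate $\sum_i x^i$ and hence carries no independent degrees of freedom. So I would first argue that $(\bar x,\{\bar x^i\}_{i\in\MI},\bar\lambda)$ is a vertex of $Q$ if and only if $(\{\bar x^i\}_{i\in\MI},\bar\lambda)$ is a vertex of the projection $Q'$ of $Q$ obtained by deleting the $x$-coordinates; this is immediate because $x$ is an affine image of the remaining variables and the map $(\{x^i\},\lambda)\mapsto(\sum_i x^i,\{x^i\},\lambda)$ is a bijection between the two polyhedra. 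Then I would work with $Q'$, whose defining system is $A^ix^i-\lambda_ib^i\ge 0$ for $i\in\MI$ together with $C\lambda\ge d$.

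For the forward direction of (i), suppose $(\{\bar x^i\},\bar\lambda)$ is a vertex of $Q'$. Since $\Pi=\conv V$ with $V\subseteq\{0,1\}^\MI$, every point of $\Pi$ is a convex combination of $0/1$ vertices; I would show $\bar\lambda$ must itself be a vertex of $\Pi$, for otherwise write $\bar\lambda=\frac12(\lambda'+\lambda'')$ with $\lambda',\lambda''\in\Pi$ distinct, and lift this to a nontrivial convex decomposition of $(\{\bar x^i\},\bar\lambda)$ in $Q'$ by scaling: using the equivalence $P^i=\conv V^i+\cone R^i \Leftrightarrow P^i(\alpha)=\conv\{\alpha v\}+\cone R^i$ recorded before the theorem, the point $\bar x^i$ lies in $P^i(\bar\lambda_i)$, and when $\bar\lambda_i\in\{0,1\}$ one can reallocate mass between the two components $P^i(\lambda_i')$ and $P^i(\lambda_i'')$; the case $\bar\lambda_i=0$ forces $\bar x^i=0$ because $P^i$ is pointed (so $P^i(0)=\cone R^i$ is a pointed cone, and if $\bar x^i\ne 0$ it would lie on a ray, again contradicting vertexhood). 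Thus $\bar\lambda\in V$, so each $\bar\lambda_i\in\{0,1\}$; for $i$ with $\bar\lambda_i=1$, $\bar x^i\in P^i$ and it must be a vertex of $P^i$, since a nontrivial convex decomposition of $\bar x^i$ inside $P^i$ (which is nonempty as $P^i$ is pointed, hence has at least one vertex) lifts directly to one of $(\{\bar x^i\},\bar\lambda)$ keeping $\bar\lambda$ fixed. The reverse direction is the easy counting/extreme-point argument: if $\bar\lambda$ is a $0/1$ vertex of $\Pi$ and each $\bar x^i$ is the prescribed vertex or $0$, then any convex decomposition of $(\{\bar x^i\},\bar\lambda)$ must keep $\bar\lambda$ fixed (vertex of $\Pi$) and hence keep each block $\bar x^i$ fixed (vertex of $P^i$, or forced to $0$), so the point is a vertex of $Q'$.

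For (ii), the unboundedness statement follows by noting that the recession cone of $Q$ in the $x$-direction is $\cone\bigcup_i R^i$ exactly when some $P^i$ (equivalently some $P^i(1)$, reachable since the $0/1$ point $e_i$ or a vertex of $\Pi$ with $i$-th coordinate $1$ exists — here one uses that $\Pi$ is nonempty; if $i$ appears in no vertex of $\Pi$ its term is irrelevant) is unbounded in the direction $w$, i.e.\ $\bar w_i=\infty$. For the optimality characterization with~\eqref{eq:dp4} bounded, I would substitute $x^i=\lambda_i z^i$ informally (rigorously: on the face where $\lambda$ is fixed, optimizing $w^Tx=\sum_i w^Tx^i$ subject to $x^i\in P^i(\lambda_i)$ decomposes into independent problems, each with optimum $\lambda_i\bar w_i$ when $\lambda_i>0$ and $0$ when $\lambda_i=0$), obtaining that the optimal value of~\eqref{eq:dp4} equals $\max\{\sum_i\lambda_i\bar w_i:\lambda\in\Pi\}=\max\{\bar w^T\lambda:\lambda\in\Pi\}$, which is~\eqref{eq:dp6}; tracing the equality conditions back through this decomposition gives precisely the stated characterization (for $\bar\lambda_i>0$, $\bar\lambda_i^{-1}\bar x^i$ optimal for~\eqref{eq:dp5}, and $\bar\lambda$ optimal for~\eqref{eq:dp6}). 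The main obstacle, and the step I would be most careful about, is the pointedness bookkeeping in the forward direction of (i): correctly handling $\bar\lambda_i\in\{0,1\}$ versus fractional-but-in-$\Pi$ values, ensuring $\bar x^i=0$ is genuinely forced when $\bar\lambda_i=0$, and making the lifting of a convex decomposition of $\bar\lambda$ to one of the whole point fully rigorous rather than merely plausible.
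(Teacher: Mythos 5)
Your proposal follows essentially the same route as the paper's proof: reducing out the aggregate variable $x$, lifting a convex decomposition $\bar{\lambda}=\alpha\mu+(1-\alpha)\nu$ via the ratio scaling $y^i=\frac{\mu_i}{\bar{\lambda}_i}\bar{x}^i$ for $\bar{\lambda}_i>0$ (which works uniformly for fractional components too, closing the one step you flagged as delicate --- your phrasing ``when $\bar{\lambda}_i\in\{0,1\}$'' should read ``for every $\bar{\lambda}_i>0$''), using pointedness to force $\bar{x}^i=0$ when $\bar{\lambda}_i=0$, the recession-cone/ray argument for unboundedness, and the blockwise identity $\max\{w^Tx\,|\,A^ix\geq\lambda_ib^i\}=\lambda_i\bar{w}_i$ for the optimality characterization. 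This matches the paper's argument in all essentials.
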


\begin{proof}
(i) To show the necessity, let $(\bar{x}, \{\bar{x}^i\}_{i \in \MI}, \bar{\lambda})$ be a vertex of $Q$.
First suppose, for the sake of contradiction, that $\bar{\lambda}$ is not a vertex of $\Pi$.
Then, there are $\mu,\,\nu \in \Pi$ and $0 < \alpha < 1$ such that $\bar{\lambda}=\alpha\mu+(1-\alpha)\nu$.
Let $\MI'$ be the set of indices $i \in \MI$ with $\bar{\lambda}_i=0$. This implies that
$\mu_i=\nu_i=0$ for $i \in \MI'$.
Define now vectors $y^i:=z^i:=\bar{x}^i$ for $i\in \MI'$,
$y^i:=\frac{\mu_i}{\bar{\lambda}_i}\bar{x}^i$, $z^i:=\frac{\nu_i}{\bar{\lambda}_i}\bar{x}^i$ for $i\in \MI \setminus \MI'$, as well as $y:=\sum_{i \in \MI}y^i$ and
$z:=\sum_{i \in \MI}z^i$. Then, one easily verifies that, on the one hand,
$$ \alpha(y,\{y^i\}_{i\in \MI},\mu)+(1-\alpha)(z,\{z^i\}_{i\in \MI},\nu) = (\bar{x}, \{\bar{x}^i\}_{i \in \MI}, \bar{\lambda}),$$
and on the other hand, $(y,\{y^i\}_{i\in \MI},\mu),\,(z,\{z^i\}_{i\in \MI},\nu) \in Q$.
Thus, $(\bar{x}, \{\bar{x}^i\}_{i \in \MI}, \bar{\lambda})$ is not a vertex, a contradiction.

Next, suppose that $(\bar{x}, \{\bar{x}^i\}_{i \in \MI}, \bar{\lambda})$ is a vertex of $Q$ and $\bar{\lambda}$ a vertex
of $\Pi$. The latter implies that $\bar{\lambda} \in \{0,1\}^\MI$. Assume now that
$\bar{x}^j$ is not a vertex of $P^j$ for some $j$ with $\bar{\lambda}_j=1$.
Then, $\bar{x}^j$ is the convex combination of two vectors $y^j,\,z^j \in P^j$.
This, in turn, implies that $(\bar{x}, \{\bar{x}^i\}_{i \in \MI}, \bar{\lambda})$ is a convex combination
of the two vectors obtained from $(\bar{x}, \{\bar{x}^i\}_{i \in \MI}, \bar{\lambda})$ by
replacing the vector $\bar{x}^j$ by $y^j$ and $z^j$.
Moreover, assuming that $\bar{x}^j \neq 0$ for some $j$ with $\bar{\lambda}_j=0$, we see that $\bar{x}^j$
is a ray of the cone $\{x \in \R^n\,|\, A^jx \geq 0\}$. This immediately implies that, also in this case,
$(\bar{x}, \{\bar{x}^i\}_{i \in \MI}, \bar{\lambda})$ is not a vertex of $Q$, a contradiction.

To show the sufficiency, suppose that
$(\bar{x}, \{\bar{x}^i\}_{i \in \MI}, \bar{\lambda})$ is not a vertex of $Q$.
Then, there are two different vectors $(y,\{y^i\}_{i\in \MI},\mu),\,(z,\{z^i\}_{i\in \MI},\nu) \in Q$
and $0<\alpha<1$ such that
$$\alpha(y,\{y^i\}_{i\in \MI},\mu)+(1-\alpha)(z,\{z^i\}_{i\in \MI},\nu)=(\bar{x}, \{\bar{x}^i\}_{i \in \MI}, \bar{\lambda}).$$
First, assume that $\mu\neq\nu$. Since both vectors are in $\Pi$, this immediately implies that
$\bar{\lambda}$ is a convex combination of $\mu$ and $\nu$, a contradiction. Consequently,
we may assume that $\mu=\nu=\bar{\lambda} \in \{0,1\}^\MI$.
However, since $(y,\{y^i\}_{i\in \MI},\mu)$ and $(z,\{z^i\}_{i\in \MI},\nu)$ have to be distinct,
and since the vectors $y$ and $z$ are just sums of the vectors $y^i$ and $z^i$, respectively,
we conclude that $y^j \neq z^j$ for some $j\in \MI$. If $\bar{\lambda}_j=1$, it follows that
$\bar{x}^j$ was not a vertex of $P^i$. Finally, if $\bar{\lambda}_j=0$, then
$\alpha y^j+(1-\alpha)z^j=\bar{x}^j \neq 0$ or $\alpha y^j+(1-\alpha)z^j= 0$, and hence $P^j$ is not a pointed polyhedron. In either case, this yields a contradiction.

\vspace{\baselineskip}
(ii) If~\eqref{eq:dp4} is unbounded, then there exists a ray $(r,\{r^j\}_{j\in \MI},\rho) \neq 0$ of $P$
with $w^Tr>0$, which implies $r \neq 0$.
Since $\Pi$ is a polytope, it follows that $\rho=0$. Moreover, since $r$ is the sum of the vectors $r^j$,
$w^Tr^i>0$ for at least one $i \in \MI$. Since, by definition, $A^ir^i\geq 0$, this implies that
$r^i$ is a ray of $P^i$ and~\eqref{eq:dp5} is unbounded. Conversely, if~\eqref{eq:dp5} is unbounded for some $i \in \MI$, there exists a ray $\tilde{r}$ of $P^i$ with $w^T\tilde{r}>0$. Define $(r,\{r^j\}_{j\in \MI},\rho)$
by $r:=r^i:=\tilde{r}$, $r^j :=0$ for $j \in \MI \setminus \{i\}$, and $\rho:=0$.
Then, we conclude that $(r,\{r^j\}_{j\in \MI},\rho)$ is a ray of $Q$ and $w^Tr>0$. Therefore,
\eqref{eq:dp4} is unbounded.

Next, suppose that~\eqref{eq:dp4} is bounded, which means that~\eqref{eq:dp5} is bounded for each $i\in \MI$.
This, in turn, justifies the definition of $\bar{w}$.

For any $i\in \MI$ and any $\alpha\in\R_+$,
$x^\star$ is an optimal solution of $\max \{w^Tx\,|\,x \in P^i\}$ if and only if
$\alpha x^\star$ is one of $\max \{w^Tx\,|\,x \in P^i(\alpha)\}$.
Now let $(\bar{x}, \{\bar{x}^j\}_{j \in \MI}, \bar{\lambda})$ be an optimal solution of~\eqref{eq:dp4}.
Assume that $w^Tx^\star>w^T(\bar{\lambda}_i^{-1}\bar{x}^i)$ for some $x^\star \in P^i$, with $i\in \MI$
and $\bar{\lambda}_i>0$.
Then, $(x, \{x^j\}_{j \in \MI}, \bar{\lambda}) \in Q$, where $x^i:=\bar{\lambda}_ix^\star$,
$x^j:=\bar{x}^j$ for $j\in \MI\setminus \{i\}$, and $x:=\sum_{i \in \MI}x^i$. Moreover, $w^Tx>w^T\bar{x}$,
a contradiction.
Next, for any $\lambda \in \Pi$ and any optimal solutions $\tilde{x}^i$ of~\eqref{eq:dp5} for each $i\in \MI$,
we derive that $(x, \{x^j\}_{j \in \MI}, \lambda) \in Q$, where $x:=\sum_{i\in \MI}x^i$ and $x^i:=\lambda_i\tilde{x}^i$
for $i\in \MI$. Thus, $w^Tx \le w^T\bar{x}$. Since
$$
\begin{array}{cr@{\;}l}
&w^Tx &\le w^T\bar{x}\\
\Leftrightarrow& \sum_{i \in \MI} w^Tx^i&\leq \sum_{i \in \MI} w^T\bar{x}^i\\
\Leftrightarrow&\sum_{i \in \MI} w^T(\lambda_i\tilde{x}^i) &\leq \sum_{i \in \MI} w^T\bar{x}^i\\
\Leftrightarrow&\sum_{i \in \MI} \lambda_iw^T\tilde{x}^i &\leq \sum_{i \in \MI} \bar{\lambda}_iw^T\tilde{x}^i\\
\Leftrightarrow&\sum_{i \in \MI} \lambda_i\bar{w}_i&\leq \sum_{i \in \MI} \bar{\lambda}_i\bar{w}_i\\
\Leftrightarrow&\bar{w}^T\lambda &\leq \bar{w}^T\bar{\lambda},
\end{array}
$$
it follows that $\bar{\lambda}$ is an optimal solution of~\eqref{eq:dp6}.

For the same reasons, if $\bar{x}^i$ is an optimal solution of~\eqref{eq:dp5} for each $i \in \MI$
and $\bar{\lambda}$ is one of~\eqref{eq:dp6}, then $(\bar{x}, \{\bar{x}^j\}_{j \in \MI}, \bar{\lambda})$ is optimal
for~\eqref{eq:dp4}, where $\bar{x}:=\sum_{i\in \MI}\bar{x}^i$.
\end{proof}

Theorem~\ref{T:dp2} can be generalized in several ways. Of course,
it can be easily extended for the case that $\Pi$ is not a 0/1-polytope but any other polytope contained in
$\R^\MI_+$. More important for the following applications is the case in which the polyhedra $P^i$ and $\Pi$ are given themselves by extended formulations. The following theorem only generalizes those results of
Theorem~\ref{T:dp2} that are relevant for the following applications.

\begin{Theorem} \label{T:dp3}
Given extensions $\Theta:= \{(\lambda,\mu) \in \R^\MI\times \R^q \,|\, C\lambda+D\mu \geq d\}$ of a 0/1-polytope
$\Pi \subseteq \R^\MI$ as well as
$Q^i = \{(x,y) \in \R^n\times \R^{p_i} \,|\, A^i x +B^iy \geq b^i\}$ of pointed polyhedra
$P^i \subseteq \R^n$, $i \in \MI$. Moreover, for any $w \in \R^n$, consider the linear programs
\begin{equation}\label{eq:dp9}
\begin{array}{lr@{\;}l@{\hspace{1cm}}l}
\multicolumn{4}{l}{\max \;w^T\sum\limits_{i\in\MI}x^i}\\[0.8em]
\st  & A^i x^i + B^iy^i -\lambda_i b^i &\geq 0, & i \in \MI,\\
     & C\lambda+D\mu &\geq d,
\end{array}
\end{equation}
\begin{equation}\label{eq:dp7} \tag{8i}
\bar{w}_i:=\max \{w^Tx\,|\, (x,y) \in Q^i\}
\end{equation}
for $i \in \MI$, and
\setcounter{equation}{8}
\begin{equation}\label{eq:dp8}
\max \{\bar{w}^T\lambda\,|\, (\lambda,\mu) \in \Theta\}.
\end{equation}
Then, ~\eqref{eq:dp9} is unbounded if and only if $\bar{w}_i=\infty$ for some $i \in \MI$.
Moreover, in case that $\bar{w}_i<\infty$ for all $i \in \MI$,
$(\{\bar{x}^i,\bar{y}^i\}_{i \in \MI},\bar{\lambda}, \bar{\mu})$
is an optimal solution of~\eqref{eq:dp9} if and only if
$(\bar{x}^i,\bar{y}^i) \in Q^i$, $i\in\MI$, are optimal solutions of~\eqref{eq:dp7}
and $(\bar{\lambda},\bar{\mu}) \in \Theta$ is an optimal solution of~\eqref{eq:dp8}.
\hfill$\Box$
\end{Theorem}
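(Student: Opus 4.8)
The plan is to derive Theorem~\ref{T:dp3} from Theorem~\ref{T:dp2} by projecting away the ``inner'' lifting variables $y^i$ and $\mu$ from the system in~\eqref{eq:dp9}. Since $Q^i$ is an extension of $P^i$, Fourier--Motzkin elimination of the $y$-block of $A^ix+B^iy\geq b^i$ produces a description $P^i=\{x\in\R^n\,|\,\tilde A^ix\geq\tilde b^i\}$ in which every inequality is a nonnegative combination $v^T(A^ix-b^i)\geq0$ of the original rows with $v\geq0$ and $v^TB^i=0$; the cone of admissible multipliers $\{v\geq0\,|\,v^TB^i=0\}$ depends only on $B^i$. As $\lambda_i$ plays no part in this elimination and merely rides along linearly, applying the \emph{same} multipliers to $A^ix^i+B^iy^i-\lambda_ib^i\geq0$ yields
\begin{equation*}
\Proj_{(x^i,\lambda_i)}\{(x^i,y^i,\lambda_i)\,|\,A^ix^i+B^iy^i\geq\lambda_ib^i\}=\{(x^i,\lambda_i)\,|\,\tilde A^ix^i\geq\lambda_i\tilde b^i\},
\end{equation*}
and likewise $\Proj_\lambda\Theta=\Pi$. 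Hence, projecting the feasible region of~\eqref{eq:dp9} onto the coordinates $(\{x^i\}_{i\in\MI},\lambda)$ and adjoining the variable $x:=\sum_{i\in\MI}x^i$ recovers \emph{precisely} the polyhedron $Q$ of Theorem~\ref{T:dp2} built from the descriptions $\tilde A^ix\geq\tilde b^i$ of the pointed polyhedra $P^i$ and some description of the $0/1$-polytope $\Pi=\Proj_\lambda\Theta$; under this identification the objective $w^T\sum_{i\in\MI}x^i$ of~\eqref{eq:dp9} becomes $w^Tx$, the objective of~\eqref{eq:dp4}.

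The next step is to transfer the two assertions across this projection. For boundedness: a recession direction of the feasible region of~\eqref{eq:dp9} with positive $w$-value projects to one of $Q$ with positive value, and, since projection of a polyhedron maps recession cone onto recession cone, conversely each such direction of $Q$ lifts back; hence~\eqref{eq:dp9} is unbounded iff~\eqref{eq:dp4} is, which by Theorem~\ref{T:dp2}(ii) happens iff $\max\{w^Tx\,|\,x\in P^i\}$ is unbounded for some $i\in\MI$, and since $w$ ignores $y$ this maximum equals $\bar w_i$ of~\eqref{eq:dp7}, i.e.\ the condition reads $\bar w_i=\infty$. Now assume $\bar w_i<\infty$ for all $i$. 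Any optimal solution of~\eqref{eq:dp9} projects to an optimal solution of~\eqref{eq:dp4} of the same value, and any optimal solution of~\eqref{eq:dp4} lifts to one of~\eqref{eq:dp9} by choosing $y^i$ and $\mu$ as witnesses for the projections above. By Theorem~\ref{T:dp2}(ii), $(\bar x,\{\bar x^i\}_{i\in\MI},\bar\lambda)$ is optimal for~\eqref{eq:dp4} iff $\bar\lambda$ is optimal for $\max\{\bar w^T\lambda\,|\,\lambda\in\Pi\}$ and $\bar\lambda_i^{-1}\bar x^i$ is optimal for $\max\{w^Tx\,|\,x\in P^i\}$ for each $i$ with $\bar\lambda_i>0$. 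The first condition is equivalent to $(\bar\lambda,\bar\mu)\in\Theta$ being optimal for~\eqref{eq:dp8}, since $\Proj_\lambda\Theta=\Pi$ and $\mu$ does not enter the objective. The second is equivalent, because $\Proj_xQ^i=P^i$ and the objective ignores $y$, to optimality of $\bar\lambda_i^{-1}(\bar x^i,\bar y^i)$ for~\eqref{eq:dp7} for each $i$ with $\bar\lambda_i>0$ --- i.e.\ of $(\bar x^i,\bar y^i)$ in the $\bar\lambda_i$-scaled copy $Q^i(\bar\lambda_i)$, via the $P(\alpha)$ device preceding Theorem~\ref{T:dp2} --- which is the assertion of Theorem~\ref{T:dp3}, read with this scaling exactly as in Theorem~\ref{T:dp2}(ii).

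I expect the only genuine obstacle to be the displayed projection identity: that eliminating $y^i$ commutes with the $\lambda_i$-scaling, introducing neither a spurious constraint on $\lambda_i$ nor any loss, including the boundary case $\lambda_i=0$, where the constraint cuts out the recession cone of $Q^i$ and must project onto that of $P^i$ --- which it does, precisely because $P^i=\Proj_xQ^i$ is nonempty. A robust way to sidestep the Fourier--Motzkin bookkeeping altogether is to argue fiberwise: for a fixed feasible point $(\{x^i\}_{i\in\MI},\lambda)$ of the projected system, feasibility of $x^i$ in the $\lambda_i$-scaled $P^i$ (resp.\ of $\lambda$ in $\Pi$) yields a witness $y^i$ (resp.\ $\mu$) directly from the definition of the extension $Q^i$ (resp.\ $\Theta$), while the reverse inclusion is immediate. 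The remaining ingredients --- surjectivity of projection on recession cones, and the scaling convention in~\eqref{eq:dp7} --- are routine; phrasing the optimality in~\eqref{eq:dp7} inside $Q^i(\bar\lambda_i)$ rather than $Q^i$ makes the final translation exact and parallel to Theorem~\ref{T:dp2}(ii).
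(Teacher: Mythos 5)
The paper states Theorem~\ref{T:dp3} without proof (the statement closes with a $\Box$), leaving it as a routine generalization of Theorem~\ref{T:dp2}; the implicit intent is evidently to re-run the proof of Theorem~\ref{T:dp2} with $(x^i,y^i)$ in place of $x^i$ and $(\lambda,\mu)$ in place of $\lambda$. Your route is genuinely different and, to my mind, cleaner: you reduce Theorem~\ref{T:dp3} to Theorem~\ref{T:dp2} by eliminating the lifting variables. The one nontrivial ingredient is your displayed projection identity, and your justification of it is sound: the projection cone $\{v\geq 0 \mid v^TB^i=0\}$ of Fourier--Motzkin elimination depends only on the coefficient matrix of the eliminated block, so applying its generators to $A^ix^i+B^iy^i-\lambda_ib^i\geq 0$ yields exactly the $\lambda_i$-scaled system $\tilde A^ix^i\geq\lambda_i\tilde b^i$; and since each $y^i$ (resp.\ $\mu$) occurs in no other block of~\eqref{eq:dp9}, the elimination is blockwise, so the projected feasible region is precisely the polyhedron $Q$ of Theorem~\ref{T:dp2} built from the descriptions $\tilde A^ix\geq\tilde b^i$ of the pointed polyhedra $P^i$ and a description of $\Pi=\Proj_\lambda\Theta$. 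The transfer of unboundedness (recession cones of nonempty polyhedra project onto recession cones, and the objective ignores the eliminated variables) and of optimality (optimal values coincide under projection, and feasible lifts exist by surjectivity of the projection) are both correct; the fiberwise fallback you sketch, including the boundary case $\lambda_i=0$ via the recession cone of $Q^i$ projecting onto that of $P^i$, also works, and $\lambda_i<0$ never arises because $\Pi$ is a $0/1$-polytope. What each approach buys: a direct adaptation avoids projection machinery but duplicates the ray and exchange arguments of Theorem~\ref{T:dp2}; your reduction isolates the single new fact --- that elimination of the $y$-block commutes with the $\lambda_i$-scaling of the right-hand side --- and then reuses Theorem~\ref{T:dp2} verbatim.

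One caveat, which you correctly flag yourself: as literally written, Theorem~\ref{T:dp3} asserts that $(\bar{x}^i,\bar{y}^i)$ itself lies in $Q^i$ and is optimal for~\eqref{eq:dp7}, which cannot be right in general (e.g.\ when $\bar{\lambda}_i=0$ and $(\bar{x}^i,\bar{y}^i)=0\notin Q^i$, or when $0<\bar{\lambda}_i<1$). What your argument actually establishes is the precise analogue of Theorem~\ref{T:dp2}(ii), namely that $\bar{\lambda}_i^{-1}(\bar{x}^i,\bar{y}^i)$ is optimal in $Q^i$ for each $i$ with $\bar{\lambda}_i>0$; that is the version needed in the applications, so reading the statement with this scaling convention, your proof is complete.
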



An application of Theorem~\ref{T:dp3} can be found in Pochet and Wolsey~\cite{PW93}
for a variant of the classical lot sizing problem with constant production capacities
in which the capacity in each period is an integer multiple of a basic capacity unit.

In the following sections, we give two further applications
of our model. Since both applications are based on compact formulations for spanning trees, we recall two well-known spanning tree formulations.

Given a graph $G$, the {\em spanning tree polytope} $\Tree(G)$ is the convex hull of the characteristic vectors $\chi^{E(T)} \in \R^{E(G)}$
of spanning trees $T \subseteq G$.
Recall that the characteristic vector $\chi^F$ of any edge set $F \subseteq E(G)$ is a 0/1-vector with $\chi^F_e=1$ if and only if $e \in F$.
As it is well known, the spanning tree polytope is the set of all
$\lambda \in \R^{E(G)}$ satisfying the nonnegativity constraints $\lambda_e \geq 0$, $e \in E$,
the equation $\lambda(E(G))=|V(G)|-1$, and the inequalities
\begin{align*}
\lambda(E(G[S])) \leq |S|-1 &&\mbox{for all $S \subset V(G), \, 2 \leq |S| \leq |V(G)|-1$}.
\end{align*}
Here, $G[S]$ denotes the subgraph of $G$ induced by $S \subseteq V$,
and for any $F \subseteq E(G)$, $\lambda(F):= \sum_{e \in F} \lambda_e$.
This linear description of  $\Tree(G)$ has an exponential number of inequalities.

According to~\cite{CCZ09, KMartin91, Yannakakis91}, a compact formulation for $\Tree(G)$ can be given as follows.
For each edge $e \in E(G)$, we introduce a variable $\lambda_e$, and for each
for each edge $e \in E(G)$, each node $u \in e$, and each node $v \in V(G)$, we
introduce a variable $\mu_{e,u,v}$.
In the extended formulation, the edge set of a spanning tree $T$ of $G$
will be represented by the vector $(\lambda^T,\mu^T)$ with $\lambda^T=\chi^{E(T)}$
and $\mu^T_{e,u,v}=1$ if and only if $e \in E(T)$ and $v$ belongs to the same component of $u$ in $T-e$.
Then, $\Tree(G)$ is the projection of the polyhedron $\Pi$ defined as the set of all $(\lambda,\mu)$ satisfying
\begin{equation} \label{eq:tree}
\begin{array}{r@{\;}lr}
\lambda(E) &=|V|-1,\\
\mu_{\{u,v\},u,v}=\mu_{\{u,v\},v,u} &=0& \mbox{for all $\{u,v\} \in E$,}\\
\lambda_e - \sum\limits_{v \in e}\mu_{e,v,w}& = 0 & \mbox{for all $e\in E,\,w \in V$,}\\
\lambda_{\{u,v\}} + \sum\limits_{w \in V \setminus \{u,v\}} \mu_{\{u,w\},w,v} & =1 &
\mbox{for all $\{u,v\} \in E$,}\\
\lambda_e&\ge 0& \mbox{for all $e \in E$,}\\
\mu_{e,v,w} &\ge 0& \mbox{for all $e \in E,v \in e,w \in V$,}
\end{array}
\end{equation}
onto the $\lambda$-space. Here, $V:=V(G)$ and $E:=E(G)$.

For later reference, we need the following result which can be found in~\cite{KMartin91}.
\begin{Lemma} \label{L:tree}
$\Pi$ is an integer polyhedron. Moreover, $(\lambda,\mu) \in \Pi$ is a vertex of $\Pi$ if and only if $\lambda$ is the characteristic vector of a spanning tree $T$ of $G'$, and
for each $e \in E(T)$ and each $u \in e$, the vector $(\mu_{e,u,v})_{v \in V(G')}$ is the characteristic vector
of the component of $T-e$ containing $u$, and for each $e \in E(G')\setminus E(T)$, $\mu_{e,u,v}=0$ for all
$u \in e,\,v \in V$. \hfill $\Box$
\end{Lemma}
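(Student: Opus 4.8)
The plan is to show that the coordinate projection $\pi\colon(\lambda,\mu)\mapsto\lambda$ restricts to an affine bijection of $\Pi$ onto $\Tree(G)$ that sends the vector $(\lambda^T,\mu^T)$ of the statement --- where $\lambda^T=\chi^{E(T)}$ and $\mu^T_{e,u,v}=1$ iff $e\in E(T)$ and $v$ lies in the component of $u$ in $T-e$ --- to $\chi^{E(T)}$. Granting this, the lemma follows from the facts about $\Tree(G)$ recalled above: an affine bijection maps vertices to vertices, the vertices of $\Tree(G)$ are exactly the $\chi^{E(T)}$, and all the resulting vertices of $\Pi$ are $0/1$ vectors, so $\Pi$ is an integer polyhedron.

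First one records that $\Pi$ is bounded, hence a polytope: $\lambda(E)=|V|-1$ and $\lambda\ge 0$ give $0\le\lambda_e\le|V|-1$, and then $\lambda_e=\sum_{v\in e}\mu_{e,v,w}$ together with $\mu\ge 0$ give $0\le\mu_{e,v,w}\le\lambda_e$. Next, by a routine inspection of~\eqref{eq:tree}, one verifies that $(\lambda^T,\mu^T)\in\Pi$ for every spanning tree $T$. The only group needing a word is the last one: for the constraint attached to an edge $\{u,v\}$, root $T$ at $v$; then $\lambda^T_{\{u,v\}}=1$ exactly when $\{u,v\}\in E(T)$, and $\mu^T_{\{u,w\},w,v}=1$ exactly when $\{u,w\}\in E(T)$ and $w$ is the parent of $u$, and since $u\neq v$ has a unique parent, precisely one of these events occurs, so the left-hand side equals $1$. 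In particular $\chi^{E(T)}\in\pi(\Pi)$ for every $T$, so $\pi(\Pi)\supseteq\Tree(G)$ by convexity.

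The substance of the argument is two claims. (a)~$\pi(\Pi)\subseteq\Tree(G)$: for $(\lambda,\mu)\in\Pi$ the cardinality equation is immediate, and for $S\subseteq V$ with $2\le|S|\le|V|-1$ one derives the subtour inequality $\lambda(E(G[S]))\le|S|-1$ by fixing a node $v_0\in S$, summing the ``flow'' equations of the fourth group of~\eqref{eq:tree} attached to the ordered pairs $(v,v_0)$ for $v\in S\setminus\{v_0\}$, using the third group to reassemble $\lambda(E(G[S]))$ out of the resulting $\mu$-terms, and bounding each of the $|S|-1$ summands by $1$ via $\mu\ge 0$; with the inequality description of $\Tree(G)$ recalled above this yields $\lambda\in\Tree(G)$. (b)~$\pi|_\Pi$ is injective, i.e.\ $\mu$ is determined by $\lambda\in\pi(\Pi)$: when $\lambda_e=0$, the equation $\lambda_e=\mu_{e,u,w}+\mu_{e,u',w}$ and $\mu\ge 0$ force $\mu_{e,\cdot,\cdot}=0$; the remaining entries are recovered by fixing a node $v$, ordering the edges by their distance from $v$ in the support graph of $\lambda$, and using the equations $\lambda_{\{u,v\}}+\sum_w\mu_{\{u,w\},w,v}=1$ and $\lambda_e=\mu_{e,u,v}+\mu_{e,u',v}$ to propagate the values of the vectors $(\mu_{e,\cdot,v})$ outward from $v$, one edge at a time.

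I expect step~(b) to be the main obstacle: the ordering must be arranged so that when an edge is processed, every other $\mu$-entry occurring in its governing equation has already been fixed, so that the equation determines $(\mu_{e,\cdot,v})$ uniquely; verifying that this can be done is precisely where the connectivity of $G$ enters (in the intended applications $G$ is complete, which is the convenient case). Once (a) and (b) are established, $\pi$ is an affine bijection $\Pi\to\Tree(G)$ with affine inverse, so the vertices of $\Pi$ are exactly the preimages of the $\chi^{E(T)}$, namely the vectors $(\lambda^T,\mu^T)$; these are $0/1$, so $\Pi$ is an integer polyhedron.
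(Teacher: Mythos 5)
The paper does not actually prove this lemma; it is quoted from Martin~\cite{KMartin91} and stated with a $\Box$. So there is nothing internal to compare against, and your proposal has to stand on its own. Parts of it do: boundedness of $\Pi$, the verification that $(\lambda^T,\mu^T)\in\Pi$ for every spanning tree $T$, and claim~(a) that the projection of $\Pi$ lands in $\Tree(G)$ are all correct and are the standard validity arguments. The problem is claim~(b), which you yourself flag as the main obstacle: it is not merely hard, it is false for $|V|\ge 4$, and your whole architecture (affine bijection $\Pi\to\Tree(G)$, hence vertices correspond to vertices) rests on it.

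Concretely, take $G=K_4$ on $\{1,2,3,4\}$ and consider the vector $\delta$ that is zero in all coordinates except
$\delta_{\{2,3\},2,1}=\delta_{\{3,4\},3,1}=\delta_{\{2,4\},4,1}=t$ and $\delta_{\{2,3\},3,1}=\delta_{\{3,4\},4,1}=\delta_{\{2,4\},2,1}=-t$,
with $\lambda$-part zero. This $\delta$ satisfies the homogeneous version of every equation in~\eqref{eq:tree}: the third group because $\delta_{e,p,1}+\delta_{e,q,1}=0$ for each edge $e=\{p,q\}$ of the triangle $\{2,3,4\}$; the fourth group because for each $u\in\{2,3,4\}$ the two terms $\delta_{\{u,w\},w,1}$, $w\in\{2,3,4\}\setminus\{u\}$, cancel (e.g.\ for $(u,v)=(2,1)$ one gets $\delta_{\{2,3\},3,1}+\delta_{\{2,4\},4,1}=-t+t=0$); and the second group and all constraints with target $v\ne 1$ are untouched. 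Now let $(\bar\lambda,\bar\mu)$ be the barycenter of the sixteen vectors $(\lambda^T,\mu^T)$; every coordinate $\bar\mu_{e,u,w}$ with $w\notin e$ is strictly positive (the star centered at $u$ gives a tree with $\mu^T_{e,u,w}=1$), so $(\bar\lambda,\bar\mu\pm\epsilon\delta)\in\Pi$ for small $\epsilon>0$. Hence two distinct points of $\Pi$ have the same $\lambda$-image, the projection is not injective, and in fact $\dim\Pi>\dim\Tree(K_4)$, so no affine bijection exists. The reason your propagation scheme cannot be repaired is visible in this example: the fourth-group equation for $(u,v)$ ties $\lambda_{\{u,v\}}$ to a \emph{sum} of $|V|-2$ unknowns $\mu_{\{u,w\},w,v}$, and the third-group equations only couple the two endpoints of one edge, which leaves circulation-type degrees of freedom around cycles of the support graph. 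A correct proof must instead analyze the vertices of $\Pi$ directly (e.g.\ show that for every objective the optimal face contains one of the vectors $(\lambda^T,\mu^T)$, or argue as Martin does via the equivalence of separation and optimization / an explicit extreme-point analysis); the lemma asserts integrality and a description of the vertex set of $\Pi$, not that $\Pi$ is affinely isomorphic to $\Tree(G)$ --- and indeed it is not.
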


The following directed formulation is due to Wong~\cite{Wong84} and Maculan~\cite{Maculan86}. Let $D$ be a directed graph, with node set $V(D)$ and arc set $A(D)$. For a fixed node $r \in V(D)$, the {\em $r$-arborescence polytope},
that is, the convex hull of the characteristic vectors of $r$-arborescences, is
the projection of the system
\begin{equation} \label{eq:directedtree}
\begin{array}{r@{\;}lr}
\nu(A) &=|V|-1,\\
\nu_a & \geq \sigma^w_a & \mbox{for all $a \in A, w \in V \setminus \{r\}$,}\\
\sigma^w(\delta^+(r))&=1 &\mbox{for all $w \in V \setminus \{r\}$,}\\
\sigma^w(\delta^+(v))-\sigma^w(\delta^-(v))&=0&\mbox{for all $v,w \in V\setminus \{r\}, v \neq w$,}\\
\sigma^w_a & \ge 0& \mbox{for all $a \in A, w \in V \setminus \{r\}$}
\end{array}
\end{equation}
onto the $\nu$-space, where $V:= V(D)$, $A:= A(D)$,
and for any $v \in V$, $\delta^+(v)$ and $\delta^-(v)$ are the set of arcs leaving
and entering $v$, respectively.
This formulation is motivated by the fact that an arborescence with root $r$ contains an $(r,w)$-path for each node $w \neq r$.
Let now $G$ be a graph and $D$ the digraph obtained from $G$ by replacing each edge $\{u,v\}$ of $G$ by the arcs $(u,v)$ and $(v,u)$. Then,
$\Tree(G)$ is the projection of~\eqref{eq:directedtree} extended by the inequalities
\begin{align}
\lambda_{\{u,v\}}& = \nu_{uv}+\nu_{vu} & \mbox{for all $\{u,v\} \in E(G)$}  \label{eq:100}
\end{align}
onto the $\lambda$-space.

The remainder of this paper is organized as follows.
In Section~\ref{Sec:2}, we provide a compact linear programming formulation for a well-known spanning tree approximation of the minimum Steiner tree problem.
In Section~\ref{Sec:3}, we present a compact formulation
for Gomory-Hu trees.  Based on this formulation, we derive one for the minimum $T$-cut problem
whose inequalities depend, however, on the objective function.
Moreover, we discuss the relevance of these formulations for finding a
compact formulation for the perfect matching polytope.
In Section~\ref{Sec:4}, we briefly summarize our findings and point out some interesting open questions.

\section{A compact formulation for the approximation of the Steiner tree problem} \label{Sec:2}

Let $G$ be an undirected graph and $S \subseteq V(G)$. A {\em Steiner tree} for $S$ in $G$ is a
tree $T \subseteq G$ whose node set $V(T)$ contains $S$. Given a cost function $c:E(G)\to\R_+$,
in the {\em Steiner tree problem} for $(G,c,S)$,
one wants to find a Steiner tree $T \subseteq G$ minimizing $c(E(T))$.

Let $(K,\bar{c})$ be the metric closure of $(G,c)$.
By a well-known result of Gilbert and Pollak~\cite{GP68},
if $T$ is a minimum Steiner tree for $S$, and $M$ is a minimum spanning tree in $K[S]$
w.r.t. $\bar{c}$, then $\bar{c}(E(M)) \leq 2c(E(T))$,
where $K[S]$ denotes the subgraph of $K$ induced by $S$.

For the all-pairs shortest path problem (for the computation of the metric closure) as well as for
the minimum spanning tree problem there exist compact formulations. Hence, using Theorem~\ref{T:dp3},
it is easy to derive a compact formulation approximating the Steiner problem.

For any digraph $D$, we denote by $V(D)$ and $A(D)$ the node and arc set of $D$, respectively.
Let $G$ be a graph and $c:E(G)\to\R_+$.
In what follows, let $D$ be the digraph obtained from $G$ by replacing each edge $e=\{v,w\} \in E(G)$ by the arcs
$(v,w)$ and $(w,v)$. Define $\tilde{c}:A(D) \to \R$ by $\tilde{c}((v,w)):=c(\{v,w\})$ for $(v,w) \in A(D)$.
Then, a shortest $s,t$-path problem in $(G,c)$ can be modeled as a minimum cost flow problem in
$(D,\tilde{c})$.
Consequently, for each edge $e=\{s,t\} \in E(K[S])$,
\begin{equation} \label{eq:shortestpath} \tag{13e}
\begin{array}{lr@{\;}l@{\hspace{1cm}}r}
\multicolumn{4}{l}{\min \; \sum\limits_{a \in A(D)} \tilde{c}(a)x^e_a}\\[0.8em]
\st  & x^e(\delta^+(s)) - x^e(\delta^-(s))  &=1,\\
     & x^e(\delta^+(v)) - x^e(\delta^-(v))  &=0 &\mbox{for all $v \in V(D) \setminus \{s,t\}$},\\
     & 0 \le x^e_a & \le 1 &\mbox{for all $a \in A(D)$}
\end{array}
\end{equation}
is a compact formulation of the shortest $s,t$-path problem in $(G,c)$.

Combining the shortest path formulations~\eqref{eq:shortestpath} for each edge $e \in E(K[S])$ with a compact formulation for the spanning tree polytope $\Tree(K[S])$ defined
on the complete graph on $S$, we obtain, by
Theorem~\ref{T:dp3},
a compact formulation approximating the Steiner tree problem defined for $(G,c)$.

\begin{Theorem}
Let $G$ be a connected graph, let $c:E(G)\to\R_+$ be a cost function, and let $S \subseteq V(G)$.
Then,
\setcounter{equation}{13}
\begin{align}
&\min \; \sum\limits_{a \in A(D)} \tilde{c}(a) \sum\limits_{e\in E}x^e_a \label{eq:steinerobj}\\[0.8em]
&\begin{array}{r@{\;}l@{\hspace{1.0cm}}r}
x^e(\delta^+(s)) - x^e(\delta^-(s))- \lambda_e &=0 &\mbox{for all $e=\{s,t\}\in E$},\\
x^e(\delta^+(v)) - x^e(\delta^-(v))  &=0 &\mbox{for all $v \in V(D)\setminus \{e\}$},\,e\in E,\\
x^e_a -\lambda_e& \le 0 &\mbox{for all $a \in A(D),\,e \in E$,}\\
x^e_a & \ge 0 &\mbox{for all $a\in A(D),\, e\in E$,}
\end{array}  \label{eq:steiner1} \\[\baselineskip]
\nonumber & (\lambda,\mu) \mbox{ satisfies~\eqref{eq:tree}} \;\;\;(\mbox{or $(\lambda,\nu,\sigma)$ satisfies~\eqref{eq:directedtree}, \eqref{eq:100}}),
\end{align}
where $E:=E(K[S])$,
is a linear programming formulation of size $\MO((|G|+|S|+\langle c \rangle)|S|^2)$ whose optimum is at most two times the optimum
of the Steiner tree problem for $(G,c,S)$, where $|G|:=|V(G)|+|E(G)|$.
\end{Theorem}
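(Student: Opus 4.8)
The plan is to establish the two assertions independently: the bound on the size of the system, which is a routine count, and the bound on its optimum, which follows by recognizing the program as an instance of Theorem~\ref{T:dp3} and then invoking the estimate of Gilbert and Pollak. For the size, one observes that the program consists of $|E(K[S])| = \binom{|S|}{2} = \MO(|S|^2)$ copies of~\eqref{eq:shortestpath}, each with $2|E(G)|$ flow variables and $\MO(|G|)$ constraints, coupled through the variables $\lambda_e$, together with one copy of~\eqref{eq:tree} set up on $K[S]$ (node set $S$, edge set $E(K[S])$), which contributes $\MO(|S|^3)$ variables and constraints. Since $S \subseteq V(G)$, this gives $\MO((|G|+|S|)\,|S|^2)$ variables and constraints; every coefficient is $\pm 1$, a right-hand side of encoding length $\MO(\log|S|)$, or a cost $\tilde c(a) = c(\{v,w\})$ of encoding length $\MO(\langle c\rangle)$, so the total size is $\MO((|G|+|S|+\langle c\rangle)\,|S|^2)$.

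For the approximation guarantee, read~\eqref{eq:steinerobj}--\eqref{eq:steiner1} together with~\eqref{eq:tree} as the linear program~\eqref{eq:dp9} of Theorem~\ref{T:dp3}: take $\MI := E(K[S])$; for $e = \{s,t\} \in \MI$ let $P^e \subseteq \R^{A(D)}$ be the feasible region of the shortest-path system~\eqref{eq:shortestpath}, which is a pointed polyhedron because it is bounded, so that~\eqref{eq:steiner1} is exactly the block $A^e x^e - \lambda_e b^e \ge 0$ of~\eqref{eq:dp9}; and let $\Theta$ be the polyhedron defined by~\eqref{eq:tree}, which, by Lemma~\ref{L:tree} and the discussion preceding it, is an extension of the $0/1$-polytope $\Tree(K[S]) \subseteq \R^\MI$. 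Since the objective is a minimum, apply Theorem~\ref{T:dp3} with cost vector $w := -\tilde c$. As $G$ is connected, the metric-closure weight $\bar c(e)$ — the length of a shortest $s,t$-path in $(G,c)$ — is finite for every $e \in \MI$, so the subproblem~\eqref{eq:dp7} has optimum $\bar w_e = -\bar c(e) > -\infty$; hence, by Theorem~\ref{T:dp3}, \eqref{eq:dp9} is bounded and its optimum equals that of~\eqref{eq:dp8}, namely $\max\{-\bar c^T\lambda \,:\, \lambda \in \Tree(K[S])\}$. Equivalently, the minimization program~\eqref{eq:steinerobj}--\eqref{eq:steiner1} together with~\eqref{eq:tree} has optimum $\min\{\bar c^T\lambda \,:\, \lambda \in \Tree(K[S])\}$. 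A linear function over the spanning tree polytope is minimized at the characteristic vector of a minimum spanning tree $M$ of $K[S]$ with respect to $\bar c$, so this optimum equals $\bar c(E(M))$; by the theorem of Gilbert and Pollak, $\bar c(E(M)) \le 2\,c(E(T))$ for a minimum Steiner tree $T$ of $S$ in $G$, which is the asserted inequality. The same argument applies with~\eqref{eq:tree} replaced by~\eqref{eq:directedtree}--\eqref{eq:100}, as the latter is likewise an extension of $\Tree(K[S])$.

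The point that requires care is the bookkeeping in passing from the minimization at hand to the maximization form of Theorem~\ref{T:dp3}: one must track the sign change $w = -\tilde c$, and the scaling $\bar\lambda_e^{-1}\bar x^e$ that recovers an honest shortest $s,t$-path from an optimal $x^e$ — so that $\sum_{a \in A(D)} \tilde c(a)\,\bar x^e_a = \bar\lambda_e\,\bar c(e)$ when $\bar\lambda_e > 0$ and $\bar x^e = 0$ otherwise, whence $\sum_{e \in \MI}\sum_{a \in A(D)} \tilde c(a)\,\bar x^e_a = \bar c^T\bar\lambda$ — and one must confirm feasibility and boundedness of the LP before applying the optimality characterization of Theorem~\ref{T:dp3}. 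That~\eqref{eq:shortestpath} correctly models the shortest $s,t$-path problem and that~\eqref{eq:tree} (resp.~\eqref{eq:directedtree}--\eqref{eq:100}) describes an extension of $\Tree(K[S])$ is taken from the cited references and from Lemma~\ref{L:tree}.
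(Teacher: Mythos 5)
Your proof is correct and follows essentially the same route as the paper: the paper dismisses the correctness part as ``clear'' (relying on the preceding discussion that combines the shortest-path formulations~\eqref{eq:shortestpath} with the spanning tree extension via Theorem~\ref{T:dp3} and the Gilbert--Pollak bound) and devotes its proof to the size count, which you carry out with the same bookkeeping. Your more explicit treatment of the sign change for minimization, the boundedness from connectivity, and the scaling $\bar{\lambda}_e^{-1}\bar{x}^e$ merely fills in details the paper leaves implicit.
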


\begin{proof}
Clearly, the above linear programs~\eqref{eq:steinerobj} subject to~\eqref{eq:tree}, \eqref{eq:steiner1}
and~\eqref{eq:steinerobj} subject to~\eqref{eq:directedtree}, \eqref{eq:100}, \eqref{eq:steiner1} are compact formulations of the spanning tree approximation
of the Steiner tree problem. So it remains to prove the correctness of
the sizes of these linear programs. We only check the first one.
Clearly, it consists of
$\MO((|A(D)+|S|)|E|)$ variables and
$\MO((|V(D)|+|A(D)|+|S|)|E|)$
inequalities.
Since $V(D)=V(G)$, $|A(D)|=2|E(G)|$, and $|E|=|E(K[S])|=\frac12|S|(|S|-1)$,
the number of variables and inequalities is $\MO((|E(G)+|S|)|S|^2)$
and $\MO((|G|+|S|)|S|^2)$, respectively.
Moreover, each coefficient $c(\{v,w\})$ is counted $2|E|$ times. Hence, the input length of the objective function is $\MO(\langle c \rangle |S|^2)$.
 \end{proof}

\section{A compact formulation for Gomory-Hu trees} \label{Sec:3}

Gomory-Hu trees have many applications in a wide area of graph theory. To mention some examples,
they provide compact representations of minimum $s,t$-cuts for all pairs of nodes $s,t$ of a graph,
they can used to determine minimum $T$-cuts, and they play an important role in the design of minimum cost communication networks.

After some preliminaries, we first consider minimum-require\-ment trees that turns out to be the same as Gomory-Hu trees. As we will see, our approach directly leads to a compact formulation for
Gomory-Hu trees.
 
For any subset of nodes $U \subseteq V(G)$ of a graph $G$,
let $\delta(U)$ be the set of edges of $G$ connecting $U$ and $V\setminus U$.
A {\em cut} in $G$ is an edge set of type $\delta(U)$ for some $\varnothing \neq U \subset V(G)$.
$U$ and $V(G) \setminus U$ are called the {\em shores} of $\delta(U)$.
For two distinct nodes $s$ and $t$ of $G$, an {\em $s,t$-cut} is an edge set of type $\delta(U)$ such that $s \in U$ and $t \in V(G)\setminus U$, or vice versa. We sometimes write $\delta_G(U)$ instead of $\delta(U)$
to indicate that $\delta_G(U)$ induces a cut of $G$.
Given a capacity function $c:E(G)\to\R_+$ and a cut $K \subseteq E(G)$,
the number $c(K)$ is called the {\em capacity} of $K$.
The {\em minimum $s,t$-cut problem} asks for an $s,t$-cut $K \subseteq E(G)$ of minimum capacity.

As it is well known, minimum $s,t$-cut problems in undirected graphs can be represented by
compact linear programs. Let $G$ be a graph, let $c:E(G)\to\R_+$ be a capacity function, and let $s$ and $t$ two distinct nodes of $G$.
For each node $v \in V(G)$, introduce a variable $z_v$, and for each edge $e \in E(G)$, a variable
$x_e$. Then, the model reads: 
\begin{equation} \label{eq:stmincut} \tag{16st}
\begin{array}{lr@{\;}l@{\hspace{1cm}}r}
\multicolumn{4}{l}{\min \; \sum\limits_{e \in E(G)}c(e)x_e}\\[0.8em]
\st & z_s & =0,\\
& z_t & =1,\\
& x_{\{u,v\}}+z_u-z_v & \geq 0,\\
& x_{\{u,v\}}+z_v-z_u & \geq 0 & \mbox{for all } \{u,v\} \in E(G).
\end{array}
\end{equation}

Hu~\cite{Hu74} studied the following problem.
Let $G$ be a graph with ``requirement'' function $c:E(G)\to\R_+$ (say the data volume to be sent between two nodes of a network), and let $K$ be the complete graph on $V(G)$.
A {\em minimum-requirement tree} is a spanning tree $H$ of $K$ minimizing
\setcounter{equation}{16}
\begin{align} \label{eq:mrtree1}
\sum_{e \in E(G)} c(e)\dist_H(e),
\end{align}
where $\dist_H(e)$ denotes the length of the path in $H$ connecting the end nodes of~$e$.

For any edge $f$ of $H$, define $r_H(f)$ to be the capacity ($=$requirement) of the cut
in $G$ determined by the two components of $H-f$.
This cut is called the {\em fundamental cut induced by $f$}.
Then,~\eqref{eq:mrtree1} is equal to
\begin{align} \label{eq:mrtree2}
\sum_{f \in E(H)} r_H(f).
\end{align}

For a fixed spanning tree $H$ of $K$ and a fixed edge $f$ of $H$, the capacity $r_H(f)$ can be easily expressed as the optimal objective value of
the linear program
\begin{equation*}
\begin{array}{lr@{\;}l@{\hspace{1cm}}r}
\multicolumn{4}{l}{\min \; \sum\limits_{e \in E(G)}c(e)x^f_e}\\[0.8em]
\st & z^f_v & =0 &\mbox{for all $v \in U$,}\\
& z^f_v & =1&\mbox{for all $v \in V(G) \setminus U$,}\\
& x^f_{\{u,v\}}+z^f_u-z^f_v & \geq 0,\\
& x^f_{\{u,v\}}+z^f_v-z^f_u & \geq 0 & \mbox{for all } \{u,v\} \in E(G),
\end{array}
\end{equation*}
where $U$ is either of the components of $H-f$. This linear program can be expressed
in terms of the compact spanning tree formulation~\eqref{eq:tree} by identifying $z$- and $\mu$-variables and fixing $\lambda$-variables as follows:
 \begin{equation*}
 \begin{array}{lr@{\;}l@{\hspace{1cm}}r}
\multicolumn{4}{l}{\min \; \sum\limits_{e \in E(G)}c(e)x^f_e}\\[0.8em]
\st & x^f_{\{u,v\}}+\mu_{f,t,u}-\mu_{f,t,v} & \geq 0,\\
& x^f_{\{u,v\}}+\mu_{f,t,v}-\mu_{f,t,u} & \geq 0 & \mbox{for all } \{u,v\} \in E(G),\\
& \lambda_e&=1&\mbox{for all $e \in E(H)$,}\\
&  \lambda_e&=0&\mbox{for all $e \in E(K)\setminus E(H)$,}\\
& \mbox{$(\lambda,\mu)$ satisfies~\eqref{eq:tree}},
\end{array}
\end{equation*}
where $t \in f$ is fixed. Thus, fixing for each edge $f$ of $H$ a node $t_f \in f$,
\eqref{eq:mrtree2} is determined by
\begin{equation*}
 \begin{array}{lr@{\;}l@{\hspace{1cm}}r}
\multicolumn{4}{l}{\min \; \sum\limits_{f\in E(H)}\sum\limits_{e \in E(G)}c(e)x^f_e}\\[0.8em]
\st & x^f_{\{u,v\}}+\mu_{f,t_f,u}-\mu_{f,t_f,v} & \geq 0,\\
& x^f_{\{u,v\}}+\mu_{f,t_f,v}-\mu_{f,t_f,u} & \geq 0 & \mbox{for all $f\in E(H)$, $\{u,v\} \in E(G)$,}\\
& \lambda_e&=1&\mbox{for all $e \in E(H)$,}\\
&  \lambda_e&=0&\mbox{for all $e \in E(K)\setminus E(H)$,}\\
& \mbox{$(\lambda,\mu)$ satisfies~\eqref{eq:tree}}.
\end{array}
\end{equation*}
Hence, it is obvious that the linear program
 \begin{align}
\min & \sum_{f \in E(K)}\sum_{e \in E(G)}c(e)x^f_e \label{eq:ghtreeobj}\\[0.8em]
\nonumber  \st \: & x^f_{\{u,v\}}+\mu_{f,t_f,u}-\mu_{f,t_f,v}  \geq 0,\\
& x^f_{\{u,v\}}+\mu_{f,t_f,v}-\mu_{f,t_f,u}  \geq 0 & \mbox{for all $f\in E(K)$, $\{u,v\} \in E(G)$,}\label{eq:ghtree1}\\
\nonumber& \mbox{$(\lambda,\mu)$ satisfies~\eqref{eq:tree}}.
\end{align}
is a compact formulation for minimum-requirement trees.

\begin{Theorem} \label{T:ght1}
Let $G$ be a graph with capacity function $c:E(G)\to\R_+$, and let $K$ be the complete graph
on $V(G)$. Moreover, let the polyhedron $P$ be the set of all $(\{x^f\}_{f \in E(K)},\lambda,\mu)$
satisfying~\eqref{eq:tree} and~\eqref{eq:ghtree1}. Then, for any
vector $(\{\bar{x}^f\}_{f \in E(K)},\bar{\lambda},\bar{\mu}) \in P$,
$(\{\bar{x}^f\}_{f \in E(K)},\bar{\lambda},\bar{\mu})$ is
a vertex of $P$ minimizing~\eqref{eq:ghtreeobj} if and only if $\bar{\lambda}$ is the characteristic vector of a
minimum-requirement tree $H$ for $(G,c)$,
for each $f \in E(H)$, the vector $(\bar{\mu}_{f,t_f,v})_{v \in V(K)}$ is the characteristic vector
of the component of $H-f$ containing $t_f$, $\bar{x}^f$ is the characteristic vector of the fundamental cut in $G$ induced by $f$, and for each $f \in E(K)\setminus E(T)$, $\mu_{f,u,v}=\bar{x}^f_v=0$ for all
$u \in f,\,v \in V$.
\end{Theorem}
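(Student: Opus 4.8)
The plan is to reduce Theorem~\ref{T:ght1} to the structural results already available, namely Lemma~\ref{L:tree} (which characterizes the vertices of the compact spanning tree polyhedron $\Pi$) and the discussion preceding the theorem (which shows that once $\lambda$ is fixed to be the incidence vector of a spanning tree $H$ and the $\mu$-variables are pinned down accordingly, the minimization of each $x^f$-block exactly computes the capacity $r_H(f)$ of the fundamental cut). The key observation is that the system~\eqref{eq:ghtree1} couples the $x^f$-variables only to the $\mu_{f,t_f,\cdot}$-variables, and for a fixed feasible $(\lambda,\mu)$ the objective~\eqref{eq:ghtreeobj} separates over $f \in E(K)$ into independent minimum-cut linear programs; so the whole thing has the two-level structure underlying Theorems~\ref{T:dp2} and~\ref{T:dp3}.

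First I would argue the ``if'' direction. Suppose $\bar\lambda = \chi^{E(H)}$ for a minimum-requirement tree $H$, the $\bar\mu$-variables are the component incidence vectors as stated, and $\bar x^f$ is the incidence vector of the fundamental cut induced by $f$ for $f \in E(H)$ (and $0$ for $f \notin E(H)$). By Lemma~\ref{L:tree}, $(\bar\lambda,\bar\mu)$ is a vertex of $\Pi$; one checks that $(\bar\lambda,\bar\mu)$ together with these $\bar x^f$ satisfies~\eqref{eq:ghtree1}, using that $\mu_{f,t_f,u}-\mu_{f,t_f,v} = \pm 1$ exactly when $\{u,v\}$ crosses the fundamental cut and $0$ otherwise. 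That this point is a \emph{vertex} of $P$ follows because any nontrivial convex decomposition of it would, after projecting away the $x^f$'s, give a decomposition of the vertex $(\bar\lambda,\bar\mu)$ of $\Pi$ (impossible), so $(\lambda,\mu)$ is forced; and for fixed $(\bar\lambda,\bar\mu)$ the $x^f$-block is the minimum-$s,t$-cut LP~(16st), whose feasible region for a $0/1$ right-hand side is an integral polyhedron all of whose minimizing faces meeting the incidence vector of a cut are vertices — more cleanly, a convex decomposition of $\bar x^f$ within that block contradicts $\bar x^f$ being a $0/1$-vector that is a vertex of the cut LP. Optimality of the objective then follows from~\eqref{eq:mrtree1}=\eqref{eq:mrtree2}: the objective value equals $\sum_{f\in E(H)} r_H(f) = \sum_{e} c(e)\,\dist_H(e)$, minimized precisely when $H$ is a minimum-requirement tree, and for that $\lambda$ the individual cut LPs are solved by the fundamental-cut incidence vectors.

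For the ``only if'' direction, let $(\{\bar x^f\},\bar\lambda,\bar\mu)$ be a vertex of $P$ minimizing~\eqref{eq:ghtreeobj}. The same projection argument shows $(\bar\lambda,\bar\mu)$ must be a vertex of $\Pi$, hence by Lemma~\ref{L:tree} $\bar\lambda = \chi^{E(H)}$ for some spanning tree $H$ of $K$ and $\bar\mu$ is given by the component incidence vectors (with $\bar\mu_{f,u,v}=0$ for $f\notin E(H)$). For $f \in E(H)$, the $x^f$-block of~\eqref{eq:ghtree1} with $\bar\mu$ plugged in is exactly the min-cut LP for the fundamental cut, and since the combined point is optimal and a vertex, $\bar x^f$ must be a vertex of that block attaining value $r_H(f)$, i.e.\ the incidence vector of the fundamental cut (here one uses that the min-$s,t$-cut LP~(16st), after the $z$-identification, has integral vertices, so the unique vertex attaining the shores' cut is the cut incidence vector; degeneracy among equal-capacity cuts is handled by observing that at a vertex the $x^f$ is $0/1$ and its support is a minimal cut, which for the shores of $H-f$ is the fundamental cut). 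Finally, since the total objective is $\sum_{f\in E(H)} r_H(f)$ and this is minimized over all spanning trees $H$, $H$ is a minimum-requirement tree; for $f \notin E(H)$ the objective contribution of $x^f \ge 0$ forces $\bar x^f = 0$ at the optimum.

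The main obstacle I expect is the vertex/optimality bookkeeping in the ``only if'' direction — specifically ruling out pathological minimizers where several cuts have equal capacity, so that the min-cut LP block has a higher-dimensional optimal face. The clean way around this is to lean on integrality: at a vertex of $P$, the $(\lambda,\mu)$-part is integral by Lemma~\ref{L:tree}, which fixes $H$ and all $\mu_{f,t_f,\cdot}$; then each $x^f$-block is an \emph{independent} LP over a fixed polyhedron, and a vertex of $P$ restricts to a vertex of each block, which for the min-$s,t$-cut polyhedron is $0/1$-valued and hence equals the incidence vector of the (unique) fundamental cut determined by the already-fixed shores. I would also remark, as the theorem's wording invites, that a minimum-requirement tree is the same as a Gomory--Hu tree, so~\eqref{eq:ghtreeobj}--\eqref{eq:ghtree1} is the promised compact formulation for Gomory--Hu trees.
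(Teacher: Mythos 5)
Your proposal is correct and follows essentially the same route as the paper: characterize the vertices of $P$ as those points where $(\lambda,\mu)$ is a vertex of $\Pi$ (hence integral, by Lemma~\ref{L:tree}) and each $x^f_{\{u,v\}}$ attains its forced lower bound $\max\{\mu_{f,t_f,u}-\mu_{f,t_f,v},\,\mu_{f,t_f,v}-\mu_{f,t_f,u}\}$, so that the $x^f$-block has a unique vertex (the fundamental-cut incidence vector) once $\mu$ is fixed, and then identify the minimizing vertices via the equality of~\eqref{eq:mrtree1} and~\eqref{eq:mrtree2}. The degeneracy worry you raise is a non-issue for exactly the reason you give at the end, and your detour through the min-$s,t$-cut LP is just a more elaborate phrasing of the paper's direct observation.
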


\begin{proof}
$0$ obviously is a lower bound of the objective values of the linear program to be considered, and hence,
always an optimal vertex of $P$ exists.

Any vector $(\{x^f\}_{f \in E(K)},\lambda,\mu) \in P$
is a vertex of $P$ if and only if each component $x^f_{\{u,v\}}$ is chosen to be minimal, that is,
\begin{align*}
x^f_{\{u,v\}}=\max \{\mu_{f,t_f,v} -\mu_{f,t_f,u}, \mu_{f,t_f,u}-\mu_{f,t_f,v}\}
\end{align*}
and $(\lambda,\mu)$ is a vertex of the extension $\Pi$ (constituted by~\eqref{eq:tree}) of the spanning
tree polytope $\Tree(K)$. So by Lemma~\ref{L:tree}, $(\{x^f\}_{f \in E(K)},\lambda,\mu)$ is vertex
if and only if satisfies the conditions mentioned in Theorem~\ref{T:ght1}.

To conclude, for any vertex $(\{x^f\}_{f \in E(K)},\lambda,\mu)$ of $P$ and any spanning tree $H$ of $K$,
$\lambda=\chi^{E(H)}$ implies $\sum_{e\in E(G)}c(e)x^f_e=r_H(f)$ if $f \in E(H)$
and equals $0$ otherwise. Thus, $(\{x^f\}_{f \in E(K)},\lambda,\mu)$ minimizes~\eqref{eq:ghtreeobj} if and only if
$H$ is a minimum-requirement tree.
\end{proof}

Next, we turn to Gomory-Hu trees. Given a graph $G$ with capacity function $c:E(G)\to\R_+$,
a {\em Gomory-Hu tree} for $(G,c)$ is a tree $H$ with $V(H)=V(G)$
such that for each edge $e=\{s,t\}$ of $H$,
the cut determined by the two components of $H-e$ is a minimum $s,t$-cut of $G$.

In the {\em all-pairs minimum cut problem}, one wants to find a  minimum $s,t$-cut for
all pairs $s,t \in V(G)$.
This problem can be represented in a compact form by Gomory-Hu trees.
Let $H$ be a Gomory-Hu tree for $(G,c)$.
For $e=\{s,t\} \in E(K)$, define $r(\{s,t\})$ as the capacity of a minimum $s,t$-cut of $G$.
Then,
\begin{align} \label{eq:ghprop}
r(\{u,v\})&=\min_{e\in E(P_{uv})} r(e) &\mbox{for all $\{u,v\} \in E(K)$,}
\end{align}
where $P_{uv}$ is the unique path in $H$ connecting $u$ and $v$.

Hu~\cite{Hu74} showed, based on an earlier result of Adolphson and Hu~\cite{AH73},
that Gomory-Hu trees are minimum-requirement trees.
Considering the proofs of the results in~\cite{AH73,Hu74} it turns out that, conversely,
minimum-requirement trees are Gomory-Hu trees. Below we restate Hu's result
taking into consideration this equivalence.

\begin{Theorem} \label{T:ght2}
Let $G$ be a graph with capacity function $c:E\to\R_+$, and let $K$ be the complete graph on $V(G)$.
Then, $H \subseteq K$ is a Gomory-Hu tree for $(G,c)$ if and only if $H$ is a minimum-requirement tree
for $(G,c)$.
\end{Theorem}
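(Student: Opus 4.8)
The plan is to treat the two implications separately and reduce both to a single combinatorial fact about pairs of spanning trees. Throughout, for distinct nodes $s,t$ I write $r(\{s,t\})$ for the capacity of a minimum $s,t$-cut of $G$, as in~\eqref{eq:ghprop}, and I use the observation that $H\subseteq K$ is a Gomory--Hu tree for $(G,c)$ precisely when $r_H(f)=r(\{s,t\})$ for every edge $f=\{s,t\}\in E(H)$, i.e.\ when every fundamental cut of $H$ is a minimum cut. Since the fundamental cut induced by $f=\{s,t\}$ is one particular $s,t$-cut, one always has $r_H(f)\ge r(\{s,t\})$, and hence $\sum_{f\in E(H)}r_H(f)\ge\sum_{f=\{s,t\}\in E(H)}r(\{s,t\})$ for every spanning tree $H$ of $K$, with equality exactly when $H$ is a Gomory--Hu tree. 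I also record the elementary inequality $r(\{u,v\})\ge\min\{r(\{u,w\}),r(\{w,v\})\}$, valid because in any minimum $u,v$-cut the node $w$ lies on one of the two shores.

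The key lemma I would establish is: \emph{for any two spanning trees $T_1,T_2$ on a common node set there is a bijection $\pi\colon E(T_1)\to E(T_2)$ such that, for each $g\in E(T_1)$, the edge $\pi(g)$ lies on the path in $T_2$ joining the ends of $g$} (equivalently, $T_2-\pi(g)+g$ is again a spanning tree). The proof is Hall's theorem on the bipartite graph that joins $g\in E(T_1)$ to every edge on the $T_2$-path between the ends of $g$: for $W\subseteq E(T_1)$ the neighbourhood is the union of these $T_2$-paths, which is a subforest of $T_2$ joining the ends of every edge of $W$; adjoining the edges of $W$ to it merges no components, so it has at least as many edges as the forest $(V,W)$, i.e.\ the Hall condition holds. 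The point is that if $g=\{s,t\}$ then the fundamental cut of $\pi(g)$ in $T_2$ separates $s$ from $t$, so $r_{T_2}(\pi(g))\ge r(\{s,t\})$.

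For ``Gomory--Hu tree $\Rightarrow$ minimum-requirement tree'', let $H^\star$ be a Gomory--Hu tree and $H$ an arbitrary spanning tree of $K$, and apply the lemma with $T_1=H^\star$, $T_2=H$ to get $\pi$. Reindexing the sum~\eqref{eq:mrtree2} for $H$ through $\pi$ gives
\[
\sum_{f\in E(H)}r_H(f)=\sum_{g\in E(H^\star)}r_H(\pi(g))\ \ge\ \sum_{g=\{s,t\}\in E(H^\star)}r(\{s,t\})\ =\ \sum_{g\in E(H^\star)}r_{H^\star}(g),
\]
where the inequality is the lemma and the last equality uses that $H^\star$ is a Gomory--Hu tree. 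Hence $H^\star$ attains the minimum of~\eqref{eq:ghtreeobj}, i.e.\ it is a minimum-requirement tree, and the optimal value equals $\sum_{g=\{s,t\}\in E(H^\star)}r(\{s,t\})$.

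For the converse, let $H$ be a minimum-requirement tree; a Gomory--Hu tree $H^\star$ exists, so $H$ attains exactly the value just computed. Feeding $H^\star$ and $H$ into the lemma, the displayed chain is forced to be tight, so $r_H(\pi(g))=r(\{s,t\})$ for every $g=\{s,t\}\in E(H^\star)$; since $\pi(g)$ lies on the $H$-path between $s$ and $t$, its fundamental cut in $H$ is an $s,t$-cut of capacity $r(\{s,t\})$, hence a \emph{minimum} $s,t$-cut. It remains to upgrade this to the Gomory--Hu property of $H$ itself, i.e.\ to show that the fundamental cut $\delta_G(X)$ of an edge $f=\{p,q\}\in E(H)$ — which we now know is a minimum cut for the pair $\pi^{-1}(f)$ — is a minimum $p,q$-cut. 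This is the step where I expect the real difficulty: supposing a cheaper $p,q$-cut $\delta_G(Y)$ existed, one uncrosses $X$ with $Y$ via submodularity of the cut function, $c(\delta_G(X))+c(\delta_G(Y))\ge c(\delta_G(X\cap Y))+c(\delta_G(X\cup Y))$, and plays the minimality of $\delta_G(X)$ off against the triangle inequality to locate the ends of $\pi^{-1}(f)$ relative to $Y$ and reach a contradiction; the awkward case is when both ends of $\pi^{-1}(f)$ fall on the same shore of $Y$, which forces re-choosing $Y$ among the minimal minimum $p,q$-cuts (using that these form a lattice) or re-choosing the bijection. This uncrossing is exactly the content of the analysis in~\cite{AH73,Hu74}, where it emerges that every minimizer of~\eqref{eq:mrtree2} realizes all pairwise minimum cuts; I would either reproduce that argument or invoke it directly, which is why the paper defers to Hu's proof for this half.
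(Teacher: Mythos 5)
Your proposal takes essentially the same route as the paper's proof: both directions rest on the existence of a bijection between the edge sets of a Gomory-Hu tree and a minimum-requirement tree that sends each edge $\{s,t\}$ to an edge of the $s,t$-path in the other tree, and on the resulting chain of inequalities (the paper's~\eqref{eq:mrtree3}, your displayed chain) being forced tight. Two differences are worth recording. First, you actually prove the bijection lemma via Hall's theorem, and your argument is correct (adjoining $W$ to the union of the corresponding $T_2$-paths merges no components, so that union has at least $|W|$ edges), whereas the paper only cites~\cite{AH73} and~\cite[Section~15.4a]{Schrijver03}. Second, and more importantly, the difficulty you flag at the end of the converse direction is genuine and is not resolved in the paper either: tightness of~\eqref{eq:mrtree3} only shows that the fundamental cut of $\varphi(g)$ in the minimum-requirement tree is a minimum $s,t$-cut for the \emph{matched} pair $\{s,t\}=g$ running over the edges of the Gomory-Hu tree, while the Gomory-Hu property requires it to be a minimum cut for the endpoints of $\varphi(g)$ itself. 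The inequality one would need, namely $r(\{p,q\})\ge r(\{s,t\})$ for $\{p,q\}=\varphi(g)$ lying on the $s,t$-path, follows neither from~\eqref{eq:eq1} nor from the triangle inequality $r(\{u,v\})\ge\min\{r(\{u,w\}),r(\{w,v\})\}$, which points the wrong way. The paper's proof closes this step with the bare sentence ``Hence, $H'$ is a Gomory-Hu tree,'' implicitly delegating the missing uncrossing argument to~\cite{AH73,Hu74}, which is exactly what you propose to do. So your attempt matches the paper's proof step for step; the gap you identify is real, but it is the paper's gap as much as yours, and a fully self-contained proof of the converse would have to supply the exchange argument you sketch.
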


\begin{proof}
Let $H$ and $H'$ be a Gomory-Hu and a minimum-requirement tree for $(G,c)$, respectively.
By definition of $H$, for each edge $f=\{s,t\} \in E(H)$ and each edge $f'$ on the $s,t$-path in $H'$
one has
\begin{align}
r_{H'}(f') \geq r_H(f), \label{eq:eq1}
\end{align}
as $r_H(f)$ is the capacity of a minimum $s,t$-cut and the components of $H'-f'$ determine
an $s,t$-cut.
There exists a bijection $\varphi: E(H)\to E(H')$ such that
for each $f=\{s,t\} \in E(H)$, $\varphi(f)$ is an edge on the $s,t$-path in $H'$, since $H$ and $H'$ are spanning trees. For details, see~\cite{AH73} or
\cite[Section~15.4a]{Schrijver03}.
So~\eqref{eq:eq1} implies
\begin{align} \label{eq:mrtree3}
\sum_{f' \in E(H')} r_{H'}(f') = \sum_{f \in E(H)} r_{H'}(\varphi(f)) \geq \sum_{f \in E(H)} r_H(f).
\end{align}
Since~\eqref{eq:mrtree1} and~\eqref{eq:mrtree2} are equal, $H'$ minimizes~\eqref{eq:mrtree2},
and hence all sums in~\eqref{eq:mrtree3} are equal.
This implies that $H$ is a minimum-requirement tree.
Moreover, because of~\eqref{eq:eq1}, $r_{H'}(f'))=r_H(\varphi^{-1}(f'))$ for all $f' \in E(H')$.
Hence, $H'$ is a Gomory-Hu tree.
\end{proof}

\begin{Corollary} \label{C:ght2}
Let $G$ be a graph, and let $c:E(G)\to\R_+$ be a capacity function.
The linear program~\eqref{eq:ghtreeobj} subject to~\eqref{eq:tree} and~\eqref{eq:ghtree1} is a compact formulation for
Gomory-Hu trees for $(G,c)$.
\end{Corollary}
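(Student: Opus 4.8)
The plan is to obtain the corollary by composing the two preceding results with essentially no new work: Theorem~\ref{T:ght1} identifies the optimal vertices of $P$ with minimum-requirement trees, and Theorem~\ref{T:ght2} identifies minimum-requirement trees with Gomory-Hu trees, so the two together say that the optimal vertices of $P$ encode exactly the Gomory-Hu trees of $(G,c)$.

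First I would dispatch compactness. The formulation~\eqref{eq:ghtreeobj} subject to~\eqref{eq:tree} and~\eqref{eq:ghtree1} uses the variables $x^f_a$ for $f\in E(K)$ and $a\in E(G)$ together with the variables $\lambda,\mu$ of~\eqref{eq:tree}; this is $\MO(|E(K)|\,|E(G)|)$ variables of the first kind and $\MO(|V(G)|^3)$ of the second, and a correspondingly polynomial number of constraints, all with coefficients in $\{0,\pm 1\}$ except for the objective, whose coefficients are the capacities $c(e)$, each occurring $|E(K)|$ times. Hence the bit-length of the whole system is polynomial in $|G|+\langle c\rangle$, so the program is compact. (This is the same bookkeeping as in the Steiner case, and indeed it was already observed that~\eqref{eq:ghtreeobj}--\eqref{eq:tree}--\eqref{eq:ghtree1} is a compact formulation for minimum-requirement trees.)

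For correctness I would argue as follows. The objective~\eqref{eq:ghtreeobj} is bounded below by $0$ on $P$, so an optimal vertex of $P$ exists (the first sentence of the proof of Theorem~\ref{T:ght1}). By Theorem~\ref{T:ght1}, $(\{\bar x^f\}_{f\in E(K)},\bar\lambda,\bar\mu)\in P$ is such an optimal vertex if and only if $\bar\lambda=\chi^{E(H)}$ for a minimum-requirement tree $H$, with $\bar\mu$ recording, for each $f\in E(H)$, the component of $H-f$ containing $t_f$, with $\bar x^f$ the characteristic vector of the fundamental cut in $G$ induced by $f$, and with all remaining coordinates zero. By Theorem~\ref{T:ght2}, the minimum-requirement trees for $(G,c)$ are precisely the Gomory-Hu trees for $(G,c)$. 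Substituting this equivalence into the characterization above shows that the map $(\{\bar x^f\},\bar\lambda,\bar\mu)\mapsto H$ is a bijection from the optimal vertices of $P$ onto the Gomory-Hu trees of $(G,c)$; in particular, projecting an optimal vertex onto its $\lambda$-coordinates returns the characteristic vector of a Gomory-Hu tree, and conversely every Gomory-Hu tree $H$ arises in this way from the vertex of $P$ exhibited in Theorem~\ref{T:ght1}. This is exactly what it means for the linear program to be a compact formulation for Gomory-Hu trees.

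There is no real obstacle here; the one point I would be careful to state explicitly is that Theorem~\ref{T:ght1} describes \emph{vertices} of $P$, so one must know that the minimum of~\eqref{eq:ghtreeobj} over $P$ is attained at a vertex — but that is precisely what the opening of the proof of Theorem~\ref{T:ght1} already provides, so the corollary is a direct substitution of Theorem~\ref{T:ght2} into Theorem~\ref{T:ght1} together with the elementary size estimate.
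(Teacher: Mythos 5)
Your proposal is correct and matches the paper's intent exactly: the paper states Corollary~\ref{C:ght2} without proof precisely because it is the immediate composition of Theorem~\ref{T:ght1} (optimal vertices of $P$ correspond to minimum-requirement trees) with Theorem~\ref{T:ght2} (minimum-requirement trees coincide with Gomory-Hu trees), together with the routine observation that the system~\eqref{eq:ghtreeobj}, \eqref{eq:tree}, \eqref{eq:ghtree1} has polynomial size. Your explicit remark that optimality must be attained at a vertex, which the opening of the proof of Theorem~\ref{T:ght1} supplies, is the right point to make and is consistent with the paper.
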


The linear program~\eqref{eq:ghtreeobj} subject to~\eqref{eq:tree} and~\eqref{eq:ghtree1} does not exactly represent the two-level approach described in Section~\ref{Sec:2},
but it is in the same spirit. In this context, recall that~\eqref{eq:ghprop} implies that $H$ is a maximum spanning tree for $(K,r)$. However, not every maximum spanning tree for $(K,r)$ is a Gomory-Hu tree, see Schrijver~\cite[Section 15.4]{Schrijver03}. Without this
restriction, we only had to couple the min-cut formulations~\eqref{eq:stmincut} with a compact representation of the convex hull of the characteristic vectors of the edge complements of spanning trees (called {\em co-trees}), obtained, for instance, by a simple modification of system~\eqref{eq:tree}. (In this context, note that Theorem~\ref{T:dp3} presumes to couple either maximization or minimization problems.)  The restriction that not every maximum spanning tree for $(K,r)$ is a Gomory-Hu tree leads then to the idea to identify $\mu$- and $z$-variables
as presented above.

In what follows, we describe some consequences of Theorem~\ref{T:ght2}.

\subsection{The Gomory-Hu tree polytope} \label{Sec:3.1}

Let $G$ be a graph with capacity function $c:E(G)\to\R^+$.
The {\em Gomory-Hu tree polytope} $\GHTree(G,c)$ is the convex hull of the characteristic vectors of
Gomory-Hu trees for $(G,c)$. Using standard linear programming techniques, a compact formulation for $\GHTree(G,c)$
can be derived from the linear program~\eqref{eq:ghtreeobj} subject to~\eqref{eq:tree},
\eqref{eq:ghtree1}.

Given a pair
$$(P)\;\; \min \{w^Tx\,|\,Ax \geq b\}\quad\quad\quad (D) \;\; \max \{y^Tb\,|\,y^TA=w^T,\,y\geq 0\}$$
of dual linear programs. If both (P) and (D) have feasible solutions, then
$$Q:= \{(x,y)\,|\, Ax \geq b,\,y^TA=w^T, w^Tx - y^Tb=0,\,y\ge 0\}$$
is a polyhedron that consists of all vectors $(x,y)$ such that $x$ and $y$ are optimal for (P)
and (D), respectively.

If (P) is the linear program~\eqref{eq:ghtreeobj} subject to~\eqref{eq:tree},~\eqref{eq:ghtree1}, then we conclude from Theorem~\ref{T:ght2} that $\GHTree(G,c)$ is the projection of
$Q$ onto the $\lambda$-space.

\begin{Corollary} \label{C:3.4}
Let $G$ be a graph, and let $c:E(G)\to\R_+$ be a capacity function.
Then, there exists an $O(|V(G)|^4+ |V(G)|^2\langle c\rangle)$ extended formulation of $\GHTree(G,c)$, where $\langle c \rangle$ denotes the input length of $c$.
\end{Corollary}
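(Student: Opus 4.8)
The plan is to use for $Q$ exactly the primal--dual pair polyhedron exhibited in the paragraph preceding the statement, to check the feasibility hypotheses on which its defining property rests, and then to count the variables, the rows, and the input length of the coefficients of the system describing $Q$.

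Let $(P)$ be the linear program~\eqref{eq:ghtreeobj} subject to~\eqref{eq:tree},~\eqref{eq:ghtree1} and let $(D)$ be its dual. First I would note that $(P)$ and $(D)$ are both feasible, which is what makes $Q$ equal to the set of optimal primal--dual pairs and hence, by the discussion before the corollary (which invokes Corollary~\ref{C:ght2}, Theorem~\ref{T:ght1} and Theorem~\ref{T:ght2}), what makes $\Proj_\lambda(Q)=\GHTree(G,c)$. Feasibility of $(P)$ is witnessed by the point of Theorem~\ref{T:ght1} associated with any spanning tree $H$ of $K$: put $\lambda=\chi^{E(H)}$, for $f\in E(H)$ let $(\mu_{f,u,v})_v$ be the incidence vector of the side of $H-f$ containing $u$ and let $x^f$ be the incidence vector of the fundamental cut induced by $f$, and set all remaining coordinates to $0$; this satisfies~\eqref{eq:tree} by Lemma~\ref{L:tree} and~\eqref{eq:ghtree1} by inspection. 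Since $c\ge 0$ and every feasible point of $(P)$ satisfies $x^f_e\ge 0$ (indeed $x^f_{\{u,v\}}\ge\max\{\mu_{f,t_f,u}-\mu_{f,t_f,v},\,\mu_{f,t_f,v}-\mu_{f,t_f,u}\}\ge 0$), the objective of $(P)$ is bounded below by $0$; hence $(P)$ has a finite optimum and $(D)$ is feasible by LP duality. One may add the remark that $\lambda\in[0,1]^{E(K)}$ on the whole feasible region, so the optimal face of $(P)$ is bounded in the $\lambda$-directions and has its recession cone inside the $x^f$-coordinates; thus its $\lambda$-projection really is the polytope $\GHTree(G,c)$.

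It then remains to bound the size of $Q$ as an extended formulation. Set $n:=|V(G)|$ and recall $|E(G)|\le\binom n2$. In $(P)$ the variables split into the $O(n^2)$ variables $\lambda_e$, the $O(n^3)$ variables $\mu_{e,u,v}$ (for $e\in E(K)$, $u\in e$, $v\in V(K)$), and the $O(n^2\cdot|E(G)|)=O(n^4)$ variables $x^f_e$; its rows split into the $O(n^3)$ rows of~\eqref{eq:tree} and the $O(n^2\cdot|E(G)|)=O(n^4)$ rows of~\eqref{eq:ghtree1}. Hence $(P)$ has $O(n^4)$ variables and $O(n^4)$ rows, so $(D)$ has $O(n^4)$ variables (one per row of $(P)$) and $O(n^4)$ rows (the equations $y^{T}A=w^{T}$, one per variable of $(P)$, together with $y\ge 0$), and adjoining the single strong-duality equation $w^{T}x=y^{T}b$ leaves $Q$ with $O(n^4)$ variables and $O(n^4)$ rows. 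Every coefficient is $0$, $\pm1$, or the integer $|V|-1$ of input length $O(\log n)$, with the sole exception of the capacities $c(e)$, each of which appears as the objective coefficient of $x^f_e$ for every $f\in E(K)$, i.e.\ $O(n^2)$ times in the data, and so contributes $O(n^2\langle c\rangle)$ in total. Altogether this yields the bound $O(n^4+n^2\langle c\rangle)$.

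The argument has no conceptual obstacle --- the equality $\Proj_\lambda(Q)=\GHTree(G,c)$ is already at hand --- so the only care required is in the bookkeeping: that the $x^f_e$-variables and the~\eqref{eq:ghtree1}-rows are genuinely the dominant $O(n^4)$ terms, that no coefficient other than the $c(e)$'s has superlogarithmic input length, and that the possible unboundedness of the optimal face of $(P)$ is harmless because it lives entirely in the $x^f$-coordinates and hence does not disturb the $\lambda$-projection.
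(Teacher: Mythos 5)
Your proposal is correct and follows exactly the route the paper intends: the paper states Corollary~\ref{C:3.4} without an explicit proof, relying on the preceding primal--dual construction of $Q$ from~\eqref{eq:ghtreeobj} subject to~\eqref{eq:tree},~\eqref{eq:ghtree1}, and your argument simply supplies the feasibility check for $(P)$ and $(D)$ and the variable/row/encoding-length bookkeeping that the paper leaves implicit. Your added remark that the recession cone of the optimal face lives only in the $x^f$-coordinates, so the $\lambda$-projection is indeed the polytope $\GHTree(G,c)$, is a worthwhile detail the paper glosses over.
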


\subsection{The minimum $T$-cut problem}

Let $G$ be a graph with capacities on the edges $c:E(G) \to \R_+$,
and let $T$ be a subset of the nodes of $G$ of even size.
A {\em $T$-cut} is a cut $\delta(U)$ of $G$ such that $|T \cap U|$ is odd.
In the {\em minimum $T$-cut problem} one wants to find a $T$-cut $\delta(U)$ of $G$ minimizing
$c(\delta(U))$. The polyhedral counterpart to the minimum $T$-cut problem is the {\em $T$-cut polyhedron},
defined as the set of all $x \in \R^{E(G)}$ for which there exists a convex combination $y$ of the characteristic vectors of $T$-cuts such that $x \geq y$.

The minimum $T$-cut problem can be solved in polynomial time. Various algorithms are available  and, among them, the famous algorithm of Padberg and Rao~\cite{PR82} that
computes a Gomory-Hu tree for $(G,c)$,
and selects among the fundamental cuts a $T$-cut of minimum capacity.
This $T$-cut has minimum capacity among all $T$-cuts of $G$.

This algorithm can be used as as basis to derive a compact formulation for the minimum $T$-cut problem.
However, the inequalities of our formulation will depend on the input $c$, and as a consequence, this is only a formulation for the minimum $T$-cut problem but not for the $T$-cut polyhedron.

Let $K$ be the complete graph on $V(G)$, let $H$ be a Gomory-Hu tree for $(G,c)$, and let $\lambda$ its characteristic vector.
Recall that for $f \in E(H)$, the capacity of the fundamental cut induced by $f$ is the optimal objective value of the linear program
\setcounter{equation}{24}
\begin{align}
 \min \;& \sum_{e \in E(G)}c(e)x^f_e \tag{24f} \label{eq:24f}\\[0.8em]
\st\; &\lambda_e=1&\mbox{for all $e \in E(H)$,} \label{eq:25}\\
& \lambda_e=0&\mbox{for all $e \in E(K)\setminus E(H)$,} \label{eq:26}\\
&\begin{array}{l}
x^f_{\{u,v\}}+\mu_{f,t_f,u}-\mu_{f,t_f,v}  \geq 0,\\
x^f_{\{u,v\}}+\mu_{f,t_f,v}-\mu_{f,t_f,u}  \geq 0
\end{array} & \mbox{for all } \{u,v\} \in E(G), \label{eq:27f} \tag{27f} \\
\nonumber& \mbox{$(\lambda,\mu)$ satisfies~\eqref{eq:tree}},
\end{align}
where $t_f \in f$ is fixed, while for $f \in E(K) \setminus E(H)$ the optimal value is zero.
Introduce for each $f \in E(K)$ a (binary) variable $\nu_f$ such that
$\nu_f=1$ if and only if $f \in E(H)$ and the fundamental cut induced by $f$ is a $T$-cut.
Denote the edge set corresponding to $T$-cuts by $F$.
Then, the following mathematical program is a disjunctive programming approach for finding a minimum $T$-cut among the fundamental cuts of $H$:
\setcounter{equation}{26}
\begin{align}
 \min \;& \sum_{f \in E(K)} \sum_{e \in E(G)}c(e)y^f_e \tag{24} \label{eq:24}\\[0.8em]
\nonumber \st \;&\mbox{$(\lambda,\mu)$ satisfies~\eqref{eq:tree}}, \eqref{eq:25}, \eqref{eq:26},\\
&\mbox{$(\{x^f\}_{f \in E(K)},\mu)$ satisfies~\eqref{eq:27f}} & \mbox{for all $f \in E(K)$}, \label{eq:27}\\
&\nu_f = \left\{ \begin{array}{cl}
1&\mbox{if $f \in F$}\\
0& \mbox{otherwise,}
\end{array} \right. \label{eq:28}\\
&\sum_{f \in E(K)}\vartheta_f = 1,\; 0 \leq \vartheta \leq \nu, \label{eq:29}\\
& y^f_e \geq x^f_e+\vartheta_f-1,\; y^f_e \geq 0 & \mbox{for all $e \in E(G), f \in E(K)$.} \label{eq:30}
\end{align}

To see the correctness of this formulation, let $(\{x^f,y^f\}_{f \in E(K)}, \lambda,\mu,\nu,\vartheta)$ be a vertex of the polyhedron determined by the inequalities of this formulation.
Recall that $\lambda$ is the characteristic vector of $H$. Moreover, if $f=\{s,t\} \in E(H)$,
$\mu_{f,s,\star}, \mu_{f,t,\star}$ are the characteristic vectors of the components of $H-f$, and $x^f$ represents the associated fundamental cut. Otherwise, that is, in case $\lambda_f=0$,
$x^f=\mu_{f,s,\star}=\mu_{f,t,\star}=0$. Moreover, since  $(\{x^f,y^f\}_{f \in E(K)}, \lambda,\mu,\nu,\vartheta)$ is a vertex, inequalities~\eqref{eq:28}, \eqref{eq:29} imply that $\vartheta$ is a unit vector with $\vartheta_g=1$ for some $g \in F$.
For the same reason, $(\{x^f,y^f\}_{f \in E(K)}, \lambda,\mu,\nu,\vartheta)$ satisfies at least one of the two equations in~\eqref{eq:30} at equality for each pair of edges $e \in E(G), f \in E(K)$. Hence,
$y^f=0$ for all $f \in E(K) \setminus \{g\}$, while $y^g$ is the characteristic vector of a $T$-cut.

In an instance of the minimum $T$-cut problem neither $H$ nor $F$ are part of the input.
Therefore, to derive a compact linear formulation from the above mathematical program
we have to remove the fixing equations~\eqref{eq:24}, \eqref{eq:25} and to replace the fixing equations~\eqref{eq:28} by an appropriate system of inequalities.

In a first step, let us assume that
$H$ but only $H$ is part of the input. One way to determine $\nu$ is as follows.
Fix any node $r \in T$, and for each $s \in T \setminus \{r\}$, denote by $P_s$ the unique $r,s$-path in $H$. Then, for any edge $f$ of $H$, $f \in F$ if and only if the number of paths $P_s$ using $f$ is odd. This, in turn, is the case if and only if the symmetric difference of the paths $P_s$ contain $f$.

Let $s \in T \setminus \{r\}$. Introducing for each edge $e \in E(K)$ a variable $\pi^s_e$, one easily checks that the characteristic vector of $P_s$ is, for instance, determined by the system
\begin{align}
& \begin{array}{r@{\;}l@{\hspace{1cm}}r}
y^s(\delta(r))=y^s(\delta(s)) &=1,\\
y^s(\delta(v) \setminus{e})-y^s_e & \geq 0 & \mbox{for all $v \in V(K)$, $e \in \delta(v)$,}\\
0 \leq y^s & \leq  \lambda.
\end{array}  \label{eq:31s} \tag{31s}
\end{align}

Next, we have to express the symmetric difference of the paths $P_s$ by linear inequalities.
As it is well-known, the characteristic vector of the symmetric difference $X \Delta Y:=(X \cup Y) \setminus (X \cap Y)$ of any
sets $X,Y \subseteq E(K)$ is determined by the system
\begin{align*}
\alpha_f & \leq \chi^X_f+\chi^Y_f & \mbox{for all $f \in E(K)$},\\
\alpha_f & \geq \chi^X_f-\chi^Y_f & \mbox{for all $f \in E(K)$},\\
\alpha_f & \geq \chi^Y_f+\chi^X_f & \mbox{for all $f \in E(K)$},\\
\alpha_f & \leq 2-\chi^X_f-\chi^Y_f & \mbox{for all $f \in E(K)$}.
\end{align*}
Since the $\Delta$-operator is associative, the symmetric difference of the paths $P_s$ can be determined sequentially. For this, let $(s_0,s_1,\ldots,s_k)$ be any order of the nodes in $T \setminus \{r\}$. Then, defining $D_0:= P_{s_0}$ and $D_i:=D_{i-1} \Delta P_{s_i}$ for $i=1,\ldots,k$, it follows that $F=D_k$. Thus, introducing for each step $i \in \{0,1,\ldots,k\}$, an edge variable vector $\alpha^i$ to represent $D_i$, we see that equations~\eqref{eq:28} can be replaced by the system
\begin{align}
\mbox{$(y^{s_i},\lambda)$ satisfies}\;&\eqref{eq:31s} & \mbox{for $s=s_i$, $i=0,1,\ldots,k$},\\
\alpha^0 - y^{s_0}&=0\\
 -\alpha^i +\alpha^{i-1}+y^{s_i} &\geq 0 & \mbox{for $i=1,\ldots,k$},\\
\alpha^i -\alpha^{i-1}+y^{s_i} &\geq 0 & \mbox{for $i=1,\ldots,k$},\\
\alpha^i +\alpha^{i-1}-y^{s_i} &\geq 0 & \mbox{for $i=1,\ldots,k$},\\
\alpha^i + \alpha^{i-1}+y^{s_i} & \leq 2 & \mbox{for $i=1,\ldots,k$},\\
\alpha^k - \nu &=0. \label{eq:37}
\end{align}

In the remainder of this section we consider the linear program~\eqref{eq:24} subject to~\eqref{eq:tree}, \eqref{eq:25}-\eqref{eq:27}, \eqref{eq:29}-\eqref{eq:37}. Removing the fixing variables~\eqref{eq:25}, \eqref{eq:26}, we obtain, of course, a compact formulation for a relaxation of the minimum $T$-cut problem. This implies that the system~\eqref{eq:tree}, \eqref{eq:27}, \eqref{eq:29}-\eqref{eq:37} characterizes a polyhedron that contains the $T$-cut polyhedron, but we conjecture that this system does not determine the $T$-cut polyhedron.
However, using standard techniques, a compact formulation for a particular instance of the minimum $T$-cut problem can be derived as follows.
By Corollary~\ref{C:3.4}, there exists a compact extension $Q$ of the Gomory-Hu polytope $\GHTree(G,c)$. We assume that $Q$ is given in $\lambda,\mu,\eta$- space.
Let $(f_1,f_2,\ldots,f_m)$ be any ordering of the edges of $K$.
Then, the unique optimal solution of the linear program
\begin{align*}
\min \;& \sum_{j=1}^m 2^{j-1} \lambda_{e_j}\\
\st\;& \mbox{$(\lambda,\mu,\eta) \in Q$}
\end{align*}
is the characteristic vector of the minimal lexicographic Gomory-Hu tree with respect to the ordering. $Q$ is determined by a compact system of the form
$$A \begin{pmatrix}
\lambda\\
\mu\\
\eta
\end{pmatrix} \geq b,
$$
and by the construction in Subsection~\ref{Sec:3.1}, we may assume that this system contains~\eqref{eq:tree} as subsystem.
The characteristic vector of the minimal lexicographic Gomory-Hu tree is the projection of
the polytope determined by
\begin{equation} \label{eq:38}
A \begin{pmatrix}
\lambda\\
\mu\\
\eta
\end{pmatrix} \geq b, \pi^TA = d^T, d^T
\begin{pmatrix}
\lambda\\
\mu\\
\eta
\end{pmatrix} - \pi^Tb =0, \pi \geq 0
\end{equation}
onto the $\lambda$-space,
where $d^T:= (1,2,4,\ldots,2^{m-1},0,0,\ldots,0)$, and hence,
the linear program~\eqref{eq:24} subject to~\eqref{eq:27}, \eqref{eq:29}-\eqref{eq:38}
is a compact formulation for the minimum $T$-cut problem.
However, we note that~\eqref{eq:38} contains inequalities some of whose coefficients have input length $|E(K)|$, and more important, some depend on $c$,
and thus this is not a formulation for the $T$-cut polyhedron.

\section{Conclusion} \label{Sec:4}
Motivated by a well-known result of Balas on disjunctive programming,
we studied extended formulations in connection with a simple two-level optimization scheme
that also can be derived as a special case from the more general framework {\em branched polyhedral systems} of Kaibel and Loos~\cite{KL10}. Using this scheme, we gave compact formulations for the spanning tree approximation
of the Steiner tree problem and for Gomory-Hu trees. Using the Gomory-Hu tree formulation, we also derived a compact formulation for the minimum $T$-cut problem whose inequalities, however, depend on the objective function.

A very interesting question in this context is if the compact formulation for the minimum $T$-cut problem can be modified in such a way that it extends to a compact formulation for the $T$-cut polyhedron. However, answering this question seems to be a complicated undertaking,
since the separation problem for the perfect matching polytope defined on a graph $G$ can be modeled as linear optimization problem over the $V(G)$-cut polyhedron. Such a formulation would also imply a compact formulation for the perfect matching polytope -- a long standing unsolved task.

A powerful theory on coupling extended formulations has been developed, especially due to the contribution of Kaibel and Loos~\cite{KL10}. In future research, we are interested to find further examples, where the coupling of extended formulations yields to compact formulations for polynomially solvable combinatorial optimization problems. 

\subsection*{Acknowledgments}
I am very grateful to Laurence Wolsey for useful discussions on compact formulations of Gomory-Hu trees and for his helpful suggestions to  improve the readability of this paper.


\begin{thebibliography}{10}

\bibitem{AH73}
\textsc{D.~Adolphson and T.~Hu}, \emph{{Optimal linear ordering}}, SIAM J.
  appl. Math. \textbf{25} (1973), pp.~403--423.

\bibitem{Balas79}
\textsc{E.~Balas}, \emph{{Disjunctive programming}}, Ann. Discrete Math.
  \textbf{5} (1979), pp.~3--51.

\bibitem{Balas05}
\textsc{E.~Balas}, \emph{{Projection, lifting and extended formulation integer
  and combinatorial optimization}}, Ann. Oper. Res. \textbf{140} (2005),
  pp.~125--161.

\bibitem{Balas10}
\textsc{E.~Balas}, \emph{{Disjunctive Programming}}.
\newblock {In J\"unger, M. (ed.) et al.: 50 Years of Integer Programming
  1958-2008. From the Early Years to the State-of-the-Art, Springer, Berlin,
  283-340}, 2010.

\bibitem{CCZ09}
\textsc{M.~Conforti, G.~Cornu\'ejols, and G.~Zambelli}, \emph{Extended
  formulations in combinatorial optimization}, tech. report, Universit\`a di
  Padova, 2009.

\bibitem{GP68}
\textsc{E.~Gilbert and H.~Pollak}, \emph{{Steiner minimal trees.}}, SIAM J.
  Appl. Math. \textbf{16} (1968), pp.~1--29.

\bibitem{Hu74}
\textsc{T.~Hu}, \emph{{Optimum communication spanning trees}}, SIAM J. Comput.
  \textbf{3} (1974), pp.~188--195.

\bibitem{KL10}
\textsc{V.~Kaibel and A.~Loos}, \emph{{Branched polyhedral systems}}.
\newblock {In Eisenbrand, F. and Shepherd, B. (eds.): Integer programming and
  combinatorial optimization. Proceedings of IPCO XIV, Ithaca NY. Lecture Notes
  in Computer Science {\bf 6080}, Springer, 283-340}, 2010.

\bibitem{Maculan86}
\textsc{N.~Maculan}, \emph{{A new linear programming formulation for the
  shortest s-directed spanning tree problem.}}, J. Comb. Inf. Syst. Sci.
  \textbf{11}, no.~2-4 (1986), pp.~53--56.

\bibitem{KMartin91}
\textsc{R.~Martin}, \emph{{Using separation algorithms to generate mixed
  integer model reformulations}}, Oper. Res. Lett. \textbf{10}, no.~3 (1991),
  pp.~119--128.

\bibitem{MRC88}
\textsc{R.~Martin, R.~L. Rardin, and B.~A. Campbell}, \emph{{Polyhedral
  characterization of discrete dynamic programming.}}, Oper. Res. \textbf{38},
  no.~1 (1990), pp.~127--138.

\bibitem{PR82}
\textsc{M.~W. Padberg and M.~Rao}, \emph{{Odd minimum cut-sets and
  b-matchings.}}, Math. Oper. Res. \textbf{7} (1982), pp.~67--80.

\bibitem{PW93}
\textsc{Y.~Pochet and L.~A. Wolsey}, \emph{{Lot-sizing with constant batches:
  Formulation and valid inequalities.}}, Math. Oper. Res. \textbf{18}, no.~4
  (1993), pp.~767--785.

\bibitem{Schrijver03}
\textsc{A.~Schrijver}, \emph{{Combinatorial optimization. Polyhedra and
  efficiency}}, {Algorithms and Combinatorics 24. Berlin: Springer}, 2003.

\bibitem{Wong84}
\textsc{R.~T. Wong}, \emph{{A dual ascent approach for Steiner tree problems on
  a directed graph.}}, Math. Program. \textbf{28} (1984), pp.~271--287.

\bibitem{Yannakakis91}
\textsc{M.~Yannakakis}, \emph{{Expressing combinatorial optimization problems
  by linear programs}}, J. Comput. Syst. Sci. \textbf{43}, no.~3 (1991),
  pp.~441--466.

\end{thebibliography}

\end{document}